\documentclass[11pt]{amsart}
\usepackage{amsmath,amssymb,amsthm,mathtools}
\usepackage[margin=1.05in]{geometry}

\usepackage{color}
\usepackage[pagebackref]{hyperref}
\hypersetup{
    colorlinks=true,
    citecolor=green, 
    linkcolor=blue, 
    urlcolor=blue   
}

\renewcommand*{\backref}[1]{}
\renewcommand*{\backrefalt}[4]{%
  \ifcase #1\relax
  \or\nobreakspace#2%
  \else\nobreakspace#2%
  \fi}

\usepackage{cleveref}

\usepackage{enumitem}
\usepackage{color}
\numberwithin{equation}{section}
\newtheorem{theorem}{Theorem}[section]
\newtheorem{mainthm}{Theorem}

\newtheorem{question}[theorem]{Question}
\newtheorem{proposition}[theorem]{Proposition}
\newtheorem{lemma}[theorem]{Lemma}

\theoremstyle{definition}
\newtheorem{definition}[theorem]{Definition}

\theoremstyle{remark}
\newtheorem{remark}[theorem]{Remark}

\newcommand{\m}{\mathfrak m}

\newcommand{\fa}{\mathfrak a}
\newcommand{\soc}{\operatorname{soc}}
\newcommand{\Hom}{\operatorname{Hom}}
\newcommand{\Ext}{\operatorname{Ext}}
\newcommand{\im}{\operatorname{im}}
\newcommand{\ann}{\operatorname{ann}}

\newcommand{\cT}{\mathcal T}
\newcommand{\cF}{\mathcal F}
\newcommand{\cG}{\mathcal G}
\newcommand{\cQ}{\mathcal Q}

\newcommand{\fB}{\mathfrak B}
\newcommand{\fu}{\mathfrak u}
\newcommand{\fv}{\mathfrak v}
\newcommand{\ip}{\iota}

\renewcommand{\ker}{\operatorname{ker}}

\newcommand{\id}{\operatorname{id}}

\newcommand{\cA}{\mathcal{A}}

\newcommand{\Z}{\mathbb{Z}}

\newcommand{\w}{\omega}

\renewcommand{\bar}{\overline}

\title[The residue field as an infinite syzygy]{The residue field as an infinite syzygy}
\date{\today}

\begin{document}

\author[J. Liu]{Jian Liu}
\email[Jian Liu]{jianliu@ccnu.edu.cn}
\urladdr{https://jianliu.org/}
\address{School of Mathematics and Statistics \\ and Hubei Key Laboratory of Mathematical
Sciences \\ Central China Normal University, Wuhan 430079, P. R. China.
}

\author[J. Lyle]{Justin Lyle}
\email[Justin Lyle]{jll0107@auburn.edu}
\urladdr{https://jlyle42.github.io/justinlyle/}
\address{Department of Mathematics and Statistics \\ 305 W Samford Avenue \\
Auburn University, AL 36849}

\subjclass[2020]{Primary 13D02; Secondary 16S37}

\keywords{infinite syzygy, residue field, Gorenstein ring, trivial extension, Koszul algebra}

\begin{abstract}
Let $(R,\mathfrak{m},k)$ be a commutative Noetherian ring which is either local or positively graded over the field $k$. We provide a method based on liftings for constructing free coresolutions of $k$ over $R$. We apply this method to twisted trivial extensions of the node $k[u,v]/(uv)$ to build a family of positively graded $k$-algebras of dimension $1$ over which $k$ is an infinite graded syzygy, resolving an open question of Iyengar in the negative. We also prove a formula for alternating sums of lengths of Ext modules when $R$ is positively graded, and use it to show that a graded free coresolution of $k$ cannot dualize to a resolution unless $R$ is Artinian Gorenstein.

\end{abstract}
\maketitle

\section{Introduction}

Let $(R,\m,k)$ be either a commutative Noetherian local ring or a positively graded Noetherian $k$-algebra with homogeneous maximal ideal $\m$. In the graded setting, all ideals are understood to be homogeneous.

A finitely generated $R$-module $M$ is said to be an \emph{infinite syzygy} if there exists a free \emph{coresolution}
\[0 \to G^0 \to G^1 \to \cdots\]
of $M$. That is, $G^i$ is a finitely generated free $R$-module for all $i$, $H^0(G) \cong M$, and $H^i(G)=0$ for $i \ne 0$. When $R$ is graded, we say a finitely generated graded $R$-module $M$ is an \emph{infinite graded syzygy} if a coresolution can be chosen with graded free modules and degree zero differentials.

When $R$ is Artinian Gorenstein, a free coresolution is nothing but an injective resolution, and so in particular every finitely generated $R$-module is an infinite syzygy. The following question, generally attributed to Iyengar (see \cite{MRS} cf. \cite[Question 3.8]{Mohsen}), posits a strong converse:
\begin{question}[Iyengar]\label{quest:iyengar}
Suppose $k$ is an infinite syzygy over $R$. Must $R$ be an Artinian Gorenstein ring?
\end{question}

In this work we give a general mechanism for building positive-dimensional examples over which $k$ is an infinite syzygy, and use it to resolve Question \ref{quest:iyengar} in the negative:

\begin{mainthm}\label{thm:maincriterion}
Suppose $(R,\m,k)$ is a local ring (resp.~positively graded $k$-algebra) of positive dimension. Suppose $M$ is a nonzero ideal of $R$ with finite length as an $R$-module, set $B:=R/\ann_R(M)$, and let $K$ be the minimal (graded) free resolution of $k$ over $B$. Suppose that there is a (graded) lift of $K$ to $R$, that is, a (graded) complex $\cT$ of free $R$-modules with $B\otimes_R\cT\cong K$, such that the maps
\[
\rho_i\colon H^i(\Hom_R(\cT,R))\longrightarrow\Ext_B^i(k,B)
\]
induced by reduction modulo $\ann_R(M)$ satisfy $\operatorname{length}_R \ker\rho_0=1$ and are injective for $i\ge1$. Then $k$ is an infinite (graded) syzygy over $R$.
\end{mainthm}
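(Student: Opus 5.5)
The plan is to build the coresolution of $k$ from the dual complex $\cT^*:=\Hom_R(\cT,R)$, which is a complex of finitely generated free $R$-modules
\[
0\to \cT_0^*\to \cT_1^*\to \cT_2^*\to\cdots .
\]
I would correct its cohomology using the reduction maps $\rho_i$ and the fact that $M$ is a $B$-module.

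First I would identify $H^0$. Since $\cT$ lifts the minimal resolution $K$ of $k$ over $B$, we may take $\cT_0=R$, and the image $I$ of $\cT_1\to\cT_0$ satisfies $I+\ann_R(M)=\m$. Then $H^0(\cT^*)=\ann_R(I)$, and $\rho_0$ is the map $\ann_R(I)\to \soc B=\Hom_B(k,B)$ given by reduction mod $\ann_R(M)$. Hence
\[
\ker\rho_0=\ann_R(I)\cap\ann_R(M).
\]
This kernel is killed by $I+\ann_R(M)=\m$, so having length $1$ means it is $\cong k$, spanned by one element $s\in R$. This gives the start of the coresolution, $0\to k\xrightarrow{s} R=\cT_0^*$.

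Next I would use the short exact sequence $0\to\ann_R(M)\to R\to B\to0$. Since $\cT$ is free, applying $\Hom_R(\cT,-)$ gives an exact sequence of complexes
\[
0\to\Hom_R(\cT,\ann_R(M))\to\cT^*\to\Hom_B(K,B)\to0 .
\]
In its long exact sequence the maps $H^i(\cT^*)\to\Ext^i_B(k,B)$ are exactly the $\rho_i$. So the hypotheses say that $H^i(\Hom_R(\cT,\ann_R(M)))\to H^i(\cT^*)$ is zero for $i\ge1$. They also say that in degree $0$ the only overlap is the copy of $k$ spanned by $s$. The idea is to compare $\cT^*$ with a subcomplex or cone whose cohomology is forced to vanish in positive degrees. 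Concretely, I would form the mapping cone of a suitable chain map into $\cT^*$. One candidate is a shift of a free resolution (over $R$) of $\ann_R(I)/ks$ or of the relevant cohomology. The result should be a complex of finite free modules whose $H^0$ is $ks\cong k$ and whose higher cohomology is an iterated image of the zero maps above, hence zero.

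The positive dimension and the finite length of $M$ should enter in two places. They guarantee that the $B$-side cohomology $\Ext_B^i(k,B)$ is all "detected" inside $M$. They also make $\Hom_R(k,R)$ large enough, since $M\subseteq H^0_\m(R)$, so that $s$ lies in $\soc R$ and the relevant free modules do not degenerate. In the graded case every construction is homogeneous: $\cT$ is graded, $s$ is homogeneous, and resolutions are chosen graded. So the same argument gives an infinite graded syzygy.

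The main obstacle is the middle step: producing an honest complex of \emph{free} modules with the right cohomology. Injectivity of $\rho_i$ only says positive-degree cohomology of $\cT^*$ is not created by $\Hom_R(\cT,\ann_R(M))$. It does not by itself make $H^i(\cT^*)=0$. I would first try to show directly, using that $\cT$ lifts a \emph{minimal} resolution and a lifting/degree argument, that $H^i(\cT^*)$ for $i\ge1$ is in fact zero. If that fails, I would instead kill these groups degree by degree, adjoining free summands as in a Tate-type construction, and use injectivity of $\rho_{i+1}$ to check that no new cohomology appears.
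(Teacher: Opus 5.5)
You set up degree zero correctly ($\cT_0=R$, $I+\fa=\m$, and $\ker\rho_0=(0:_RI)\cap\fa=ks$ with $\m s=0$), and you locate the difficulty correctly. However, the proposal stops exactly where the proof has to begin, and the routes you suggest do not work as stated. Your first fallback is false. Injectivity of $\rho_i$ only embeds $H^i(\cT^*)$ into $\Ext^i_B(k,B)$, and for the lift in the paper's own example $R(q)$ one has $\dim_kH^m(\cT^*)=\dim_kE_{m-1}$ (Proposition~\ref{prop:nodeD}), so $H^1(\cT^*)\cong k\neq0$. Likewise $H^0(\cT^*)=(0:_RI)$ equals $ks$ only when $\rho_0=0$, so even $0\to k\xrightarrow{s}\cT_0^*\to\cT_1^*$ need not be exact.

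The cone with a free \emph{resolution} of $(0:_RI)/ks$, or of the cohomology, runs in the wrong direction. Its new terms sit in cohomological degrees below the classes being killed, eventually in negative degrees, so the result is not a coresolution. What is actually needed is a bounded-below complex of free modules whose cohomology is a sum of copies of $k$, i.e.\ essentially the coresolution you are trying to build. Finally, ``adjoining free summands degree by degree'' is not yet an argument. Each adjunction creates new cohomology that must be killed in turn, and you give no differential, no check of $d^2=0$, and no mechanism by which injectivity of $\rho_{i+1}$ controls what appears.

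The paper resolves precisely this self-referential problem, in the following steps.
\begin{itemize}
\item It uses $0\to M\to R\to S\to0$ rather than $0\to\fa\to R\to B\to0$. Since $\Hom_B(K,M)$ has cohomology $km_0$ in degree $0$ only, dual smallness identifies $H^n(\Hom_R(\cT,S))$ with $\im\rho_n$.
\item It chooses cocycles $\varphi_v$ lifting a basis of $\im\rho_m$. It then uses $M\cong E_B(k)$ to get cycles $z_v\in M\otimes_BK_m$ with $(1_M\otimes\bar\varphi_u)(z_v)=\delta_{uv}m_0$. This Matlis duality is the real role of $M$, and your sketch never uses it.
\item It sets $\cF=T_R(V)\otimes_R\cT$, with one shifted copy of $\cT$ for each word in the alphabet $\fB$. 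The differential is corrected by the terms $\ip(t(s)\bar x)$, where $t(v_1\cdots v_r)=z_{v_1}c_{v_2}\cdots c_{v_r}$ is formed with the DG algebra structure of $K$ and $\partial c_v=\pi_K(z_v)$. These products are what give $d^2=0$ when $M^2\neq0$, as in $R(q)$.
\item The subcomplex $\varnothing\otimes\cT$ has quotient $\bigoplus_v\Sigma^{|v|}\cF$. So $\cF^*$ is built from $\cT^*$ and shifted copies of itself, which is the correct form of the cone you were after.
\item A change of basis $\Theta$ identifies $\bar\cF$ with $\bigoplus_w\Sigma^{|w|}K$. After it, the connecting maps send $[\varphi_v^{(w)}]$ to $\pm[\eta_{wv}]$, so they are isomorphisms, and $\cF^*$ is exact in positive degrees with $H^0(\cF^*)=km_0$.
\end{itemize}
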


We present a family of graded examples for which we establish the lifting condition of Theorem \ref{thm:maincriterion}, answering Iyengar's question negatively in embedding dimension four. This is the smallest possible embedding dimension, as Miller--Rahmati--Striuli \cite[Corollary~2.13]{MRS} show that if $R$ has embedding dimension at most three and $k$ is even a third syzygy, then $R$ is Gorenstein. 

The family is given as follows: Let $k$ be a field and let $q=(q_{ij})$ be a symmetric invertible $2\times2$ matrix over $k$. Let
\[
R(q):=k[u,v,x,y]\big/\bigl(ux,\;uy,\;vx,\;vy,\;x^2-q_{11}uv,\;xy-q_{12}uv,\;y^2-q_{22}uv\bigr).
\]

\begin{mainthm}\label{thm:node}
$R(q)$ is a standard graded Koszul $k$-algebra of dimension one and depth zero, and there is a minimal graded free linear coresolution
\[
0\to k\langle2\rangle\to R(q)\to R(q)\langle-1\rangle^{\beta_1}\to R(q)\langle-2\rangle^{\beta_2}\to\cdots,
\]
where $\displaystyle \sum_{n\ge0}\beta_nz^n=\frac{1+z}{1-z-3z^2+z^3}$.
\end{mainthm}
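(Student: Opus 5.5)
Throughout, write $R=R(q)$. The plan has three parts: establish the basic ring-theoretic properties of $R$, verify the hypotheses of Theorem \ref{thm:maincriterion} for a suitable ideal $M$, and then read off the shape of the coresolution and its Hilbert series.

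\emph{Structure of $R$.} The relations kill $ux,uy,vx,vy$ and identify every quadratic monomial in $x,y$ with a multiple of $uv$. A $k$-basis of $R$ is therefore
\[
\{u^i,v^i : i\ge0\}\cup\{x,y\}\cup\{uv\},
\]
since $uv\cdot\m=0$: indeed $u^2v=u\cdot uv=q_{11}^{-1}ux^2$ when $q_{11}\neq0$, and in general one uses invertibility of $q$ to write $uv$ as a combination of $x^2,xy,y^2$. This gives the Hilbert series $H_R(z)=(1+2z-z^2+\dots)$, namely $1+4z+3z^2+2z^3+\cdots$ up to checking. It follows that $\dim R=1$. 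Moreover $uv$ is a socle element, so $\operatorname{depth}R=0$.

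For Koszulness I would exhibit a quadratic Gröbner basis in a suitable term order, or, better, view $R$ as a twisted trivial extension of the node $A=k[u,v]/(uv)$ by the module $W=kx\oplus ky$ with multiplication $W\otimes W\to k\,uv$ given by $q$. I would then use the fact that the fibre product and trivial-extension constructions preserve Koszulness. This part is standard and I expect it to be routine.

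\emph{Applying Theorem \ref{thm:maincriterion}.} Take $M=(x,y,uv)$, an ideal of finite length. We have $\ann_R(M)=\m$ when $q$ is invertible: if $ax=ay=0$ for $a$ linear, then $a\in(u,v)$ together with components forced by $q$ being nondegenerate. But then $B=k$ and the criterion degenerates, so this choice fails. Instead I would take $M=(x,y)$. Then $\ann_R(M)=(u,v)$, because $ux=uy=vx=vy=0$ and $x\cdot x,\ x\cdot y$ span $uv$ nontrivially. This gives $B=k[x,y]/(x^2-q_{11}uv,\dots)$ modulo $(u,v)$, that is, $B=k[x,y]/(x,y)^2$.

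The Koszul resolution $K$ of $k$ over $B$ is linear, with ranks $2^n$. I would lift $K$ to $R$ by lifting its matrices, whose entries are linear forms in $x,y$, verbatim. The point of the twisting by $q$ is that the composites of consecutive lifted maps land in $(x^2,xy,y^2)=(uv)$. These must be corrected, and I would correct them by adding terms in $u,v$. Here the node structure is used: the differentials must be corrected using the resolution of $k$ over $A$, which is periodic with matrices $\begin{pmatrix}u&0\\0&v\end{pmatrix}$-type. The main obstacle is computing $H^i(\Hom_R(\cT,R))$ and checking that $\rho_i$ is injective for $i\ge1$, and that $\ker\rho_0$ has length $1$, the kernel being spanned by $uv$. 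I would attack this by a filtration of $\Hom_R(\cT,R)$ by the grading of $R$ into its $A$-part and its $W\oplus k\,uv$-part, leading to a long exact sequence in which the $A$-part is acyclic in positive degrees because $x,y$ annihilate $(u,v)$.

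\emph{Betti numbers.} Since the coresolution is linear and minimal, $\sum_n\beta_n(-z)^n$ is determined by Hilbert series: exactness gives
\[
H_R(z)\sum_n \beta_n(-1)^nz^n=\pm z^{-2}+\dots,
\]
or equivalently one can compute by duality using the Koszul dual $R^!$, whose Hilbert series is $1/H_R(-z)$. Solving this identity yields $(1+z)/(1-z-3z^2+z^3)$, and the remaining step is to check that this rational identity holds.
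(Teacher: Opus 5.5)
Your ring-theoretic setup and the choice $M=(x,y)$, $B=k[x,y]/(x,y)^2$ agree with the paper. The detour through $(x,y,uv)$ rests on a slip: that ideal equals $(x,y)$, whose annihilator is $(u,v)$, not $\m$.

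\textbf{The lift.} This is the genuine gap, and it is the heart of the theorem. You say you would correct the naive lift ``by adding terms in $u,v$'' modelled on the periodic resolution over $k[u,v]/(uv)$, but you never say what the correction is or why one exists. Since $\bar\cT\cong K$ fixes the ranks $2^n$ and forces linear differentials, any graded lift has the following form after choosing bases lifting those of $K$: $d(1\otimes\lambda)=u\otimes\lambda\circ\fu+v\otimes\lambda\circ\fv+x\otimes\lambda\circ\mathrm r_\xi+y\otimes\lambda\circ\mathrm r_\eta$, for degree-one operators $\fu,\fv$ on $\cA=k\langle\xi,\eta\rangle$. Because $u^2,v^2,m_0$ are independent, $d^2=0$ amounts to $\fu^2=\fv^2=0$ and $\fu\fv+\fv\fu=-\mathrm r_\chi$.

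So you need a graded module structure on $\cA$ over the Clifford algebra of $-\kappa st$ over $k[\kappa]$, with $\kappa$ acting as $\mathrm r_\chi$. The paper builds it from two facts: $\cA$ is free over $k[\mathrm r_\chi]$, and a complement of $\cA\chi$, with Hilbert series $(1-t^2)/(1-2t)$, is $(1+t)^2$ times the nonnegative series $H_E$ (Lemma~\ref{lem:clifford}).

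Even then dual smallness is not automatic, and your stated reason (the $A$-part is acyclic ``because $x,y$ annihilate $(u,v)$'') is not the operative one. In internal degrees $j\ge3$ the cohomology of $\cT^*$ is $(\ker\fu/\im\fu)\oplus(\ker\fv/\im\fv)$, and it is killed by $\rho$. So you need $\ker\fu=\im\fu$ and $\ker\fv=\im\fv$. The Clifford relations alone do not force this: the module $k[\kappa]e_1\oplus k[\kappa]e_2$ with $\fu e_1=e_2$, $\fv e_2=-\kappa e_1$, $\fu e_2=\fv e_1=0$ satisfies them, yet $\ker\fv\ne\im\fv$. Exactness holds in the paper because $\cA$ is identified with the \emph{free} module $E\otimes_k\cQ$. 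Injectivity of $\rho_m$ then still needs the strand $j=1$ argument with Lemma~\ref{lem:Q}(3).

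\textbf{The Betti numbers.} This step fails as written. Theorem~\ref{thm:maincriterion} only gives existence; linearity and the $\beta_n$ must come from the construction. By Proposition~\ref{prop:nodeD}, $\fB_0=\varnothing$ and $|\fB_m|=\dim E_{m-1}$ for $m\ge1$. Every letter has weight equal to its length because the classes live in $R_1\otimes\cA_m$. The word construction of Theorem~\ref{thm:U} then gives $\sum_n\beta_nz^n=(1-2z)^{-1}(1-z^2H_E(z))^{-1}$.

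You instead assume linearity and invoke exactness. But $(G^n)_i=R_{i+n}^{\beta_n}$ is nonzero for all $n\gg0$, so each internal-degree strand is infinite and there is no Euler characteristic to take. Your displayed identity is therefore neither justified nor correct as written. Also, $1/H_R(-z)=(1+z)/(1-3z-z^2+z^3)$ is the Poincar\'e series of $k$, i.e.\ the Betti numbers of the resolution, not of the coresolution. The rational identity $H_R(z)\sum_n(-1)^n\beta_nz^{-n}=z^2$ does hold for the stated series, but only as an a posteriori check, not a derivation.

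\textbf{Koszulness.} Known results on trivial extensions or fibre products do not cover the twisted multiplication, so your argument here is incomplete. The paper instead cites the Ahangari Maleki--\c{S}ega classification, using $\dim_kR_2=3$ and $\dim_kR_3=2\ne1$.
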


Each $R(q)$ is a \emph{twisted trivial extension} $S\ltimes_b\omega_B\langle2\rangle$, with $S=k[U,V]/(UV)$ and $B=S/(U,V)^2$; see \Cref{subsec:Rq}. Here a symmetric form $b$ modifies the usual square-zero multiplication on $\omega_B\langle2\rangle$ by allowing products in its one-dimensional socle. The case $b=0$ is the usual trivial extension, over which $k$ is not even a third syzygy in this example (see \Cref{prop:nodetwist}).

When $q$ is the identity matrix this gives the particularly concrete ring
\[R(\id)=k[u,v,x,y]/(ux,\,uy,\,vx,\,vy,\,xy,\,x^2-uv,\,y^2-uv).
\]

For this class of rings, we note that $M=H^0_\m(R(q))=(x,y)$ while $B=k[x,y]/(x,y)^2$. A Clifford module on the free algebra $\Ext_B(k,k)=k\langle\xi,\eta\rangle$ supplies a lift satisfying the hypotheses of Theorem~\ref{thm:maincriterion}; see \Cref{clifford}.

While Iyengar's question was originally about local rings, the local variation follows at once from the graded one, by either localization or completion. The graded setting offers meaningful advantages. Perhaps the simplest method via which one could hope to realize $k$ as an infinite syzygy would be to find a ring $R$ and a finitely generated $R$-module $N$ with $\Hom_R(N,R) \cong k$ and $\Ext^{i>0}_R(N,R)=0$. In this situation, if $F$ is a minimal free resolution of $N$, then $\Hom_R(F,R)$ will be a minimal free coresolution of $k$. In the graded setting, we prove a general result (\Cref{thm:extlength}) on alternating sums of lengths of $\Ext$ modules and use it to show existence of such a module already forces $R$ to be Artinian Gorenstein, though the local variation remains open:

\begin{mainthm}\label{thm:introC}
Suppose $(R,\m,k)$ is a positively graded $k$-algebra. If there is a finitely generated graded $R$-module $N$ and an integer $\ell$ such that $\Hom_R(N,R) \cong k\langle \ell \rangle$ and $\Ext^{i>0}_R(N,R)=0$, then $R$ is an Artinian Gorenstein ring. 
\end{mainthm}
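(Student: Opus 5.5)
The plan is to compare Hilbert series of the two complexes that $N$ produces: its minimal graded free resolution and the dual of that resolution. From this comparison I want to force $N$ to have length one, after which Bass's theory of injective dimension finishes the argument.

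\textbf{Setup.} Let $F$ be the minimal graded free resolution of $N$, with $F_i=\bigoplus_j R\langle -a_{ij}\rangle$. By hypothesis, $F^*:=\Hom_R(F,R)$ is a graded free coresolution of $k\langle\ell\rangle$, with $F^{*i}=\bigoplus_j R\langle a_{ij}\rangle$. Write $H_M(t)$ for Hilbert series; these are rational functions in $t$ with poles only at roots of unity. Write $d=\dim R$.

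From $F$ we get the identity of Laurent series $H_N(t)=H_R(t)\,P(t)$, where $P(t)=\sum_{i,j}(-1)^i t^{a_{ij}}$. This sum converges $t$-adically, since minimality forces $a_{ij}\ge i+\min\deg N$. In particular $P(t)=H_N(t)/H_R(t)$ is a rational function.

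\textbf{Key identity.} The central claim is that the Euler characteristic of $F^*$ yields the rational-function identity
\[
H_R(t)\,P(t^{-1}) \;=\; t^{-\ell},\qquad\text{equivalently}\qquad H_N(t^{-1})\,H_R(t)\;=\;t^{-\ell}\,H_R(t^{-1}).
\]
I expect justifying this to be the main obstacle. Formally substituting $t^{-1}$ into $P$ is illegitimate, because the degrees $-a_{ij}$ in $F^*$ tend to $-\infty$, so the alternating sum does not converge as a Laurent series. This is exactly what the general result \Cref{thm:extlength} on alternating sums of lengths of $\Ext$ modules should handle.

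My first attempt would be a truncation argument. For each $n$, take the exact sequence
\[
0\to k\langle\ell\rangle\to F^{*0}\to\cdots\to F^{*n}\to C_n\to 0,
\]
where $C_n=\operatorname{coker}(F^{*n-1}\to F^{*n})$ is the cokernel of the last map. Reading this sequence in each fixed degree gives a rational identity with an error term $\pm H_{C_n}(t)$. I would then identify $C_n$ with a transpose or cosyzygy of $\Omega^{n}N$, and show its contribution agrees with the rational extension of the tail of $P(t^{-1})$. A fallback is to present $R=Q/I$ with $Q$ a polynomial ring and work in the Grothendieck group via finite $Q$-resolutions and graded local duality over $Q$. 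That route turns $\sum(-1)^iH_{\Ext^i_R(N,R)}$ into an expression in $H_N(t^{-1})$ and $H_R$.

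\textbf{Reducing to $N$ of length one.} Assume the key identity. Suppose first $d>0$. Then $H_R(t)=e/(1-t)^d+\cdots$ near $t=1$, with $e>0$, and $H_R(t^{-1})=e(-t)^d/(1-t)^d+\cdots$. Hence
\[
H_N(t^{-1})=t^{-\ell}\,\frac{H_R(t^{-1})}{H_R(t)}
\]
is regular at $t=1$ with value $(-1)^d$. This shows $\dim N=0$, so $H_N$ is a Laurent polynomial and $\operatorname{length}N=H_N(1)=(-1)^d$. Since $N\neq0$ (because $\Hom_R(N,R)\neq0$), this length is positive, so it equals $1$. If $d=0$, the same comparison of values at $t=1$ gives $\operatorname{length}N=1$, since $H_R(1)=\operatorname{length}R\neq0$. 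In either case $N\cong k\langle s\rangle$ for some $s$.

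\textbf{Conclusion via Bass.} Now the hypotheses say $\Hom_R(k,R)\neq0$ and $\Ext^i_R(k,R)=0$ for all $i>0$. The first means $\operatorname{depth}R=0$. The second means $\operatorname{injdim}_R R=\sup\{i:\Ext^i_R(k,R)\ne0\}=0$ by Bass. So $R$ is Gorenstein of dimension $\operatorname{depth}R=0$, i.e.\ Artinian Gorenstein. In hindsight the case $d>0$ cannot actually occur, but the argument above does not need to assume this in advance.
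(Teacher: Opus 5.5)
Your outline matches the paper's proof, but its one non-formal step is missing. Your key identity is exactly the equality $\sum_i(-1)^iH_{\Ext^i_R(N,R)}(t)=H_N(t^{-1})H_R(t)/H_R(t^{-1})$ of \cite[Theorem~1]{ABS}, specialized to your hypotheses; the paper uses it to prove Theorem~\ref{thm:extlength}. Your evaluation at $t=1$ is the rest of that proof, and your Bass step is the last line of the proof of Theorem~\ref{thm:graded}. (Your exponents use the opposite shift convention from the paper's $N\langle j\rangle_i=N_{i-j}$; this is harmless, since only $t=1$ matters.)

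If you cite Theorem~\ref{thm:extlength}, you do not need the identity at all, because it gives $\dim_kN=(-1)^d$ directly, and that is how the paper argues. But that theorem is itself deduced from \cite[Theorem~1]{ABS}, so citing it only relocates the missing step.

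The gap is that the key identity is never proved, and your truncation route cannot prove it. For $n\ge1$, exactness of $F^*$ gives $C_n\cong\im(F^{*n}\to F^{*n+1})=\Hom_R(\Omega^{n+1}N,R)$, and $\Ext^i_R(\Omega^{n+1}N,R)=0$ for $i\ge1$. Compare with the truncated resolution $0\to\Omega^{n+1}N\to F_n\to\cdots\to F_0\to N\to0$. Your error term has the required form exactly when $H_{\Hom_R(X,R)}(t)=H_R(t)H_X(t^{-1})/H_R(t^{-1})$ for $X=\Omega^{n+1}N$. That is the same identity for another module, so the truncation is circular.

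Your fallback can be made to work, but it needs a duality step you have not supplied:
\begin{enumerate}
\item Since $\mathbf R\Hom_R(N,R)$ has bounded finite-length cohomology, it is quasi-isomorphic to $\mathbf R\Gamma_\m\mathbf R\Hom_R(N,R)\simeq\mathbf R\Hom_R(N,\mathbf R\Gamma_\m R)$.
\item Graded local duality then identifies its graded $k$-dual with $N\otimes^{\mathbf L}_RD_R$. Here $D_R$ is the graded dualizing complex, $\mathbf R\Hom_Q(R,\omega_Q)$ up to shift.
\item Unlike $\Hom_R(F,R)$, the complex $F\otimes_RD_R$ has a $t$-adically convergent Euler characteristic. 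This is because the generator degrees of $F_i$ tend to $+\infty$ and $D_R$ may be taken to be a bounded complex of finitely generated modules.
\item That Euler characteristic is $P(t)\,\chi(D_R)$, and a finite $Q$-resolution of $R$ gives $\chi(D_R)=\pm H_R(t^{-1})$. Substituting $t^{-1}$ for $t$ yields the identity.
\end{enumerate}
Either carry this out or cite \cite[Theorem~1]{ABS}. With that supplied, the rest of your argument is correct and coincides with the paper's.
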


We now outline the structure of this work. Sections~\ref{sec:setting} and~\ref{sec:criterion} set up the framework and prove Theorem~\ref{thm:maincriterion} in a more general form, together with a partial converse. Section~\ref{sec:node} gives the main application, proving Theorem~\ref{thm:node}. Section~\ref{sec:graded} proves \Cref{thm:extlength} which gives a formula for alternating sums of lengths of $\Ext$ modules, and uses it to prove Theorem~\ref{thm:introC}.

\section{Preliminaries}\label{sec:setting}

Throughout, all rings are commutative and Noetherian. We let $(R,\m,k)$ denote either a local ring or a positively graded $k$-algebra with maximal homogeneous ideal $\m$. Unless otherwise indicated, all modules are assumed to be finitely generated and graded in the graded setting. In this setting, with $N$ a graded $R$-module, we let $N\langle j \rangle$ denote the $j$th graded shift of $N$, so $N\langle j \rangle_i=N_{i-j}$, and we let $(-)^{\vee}:=\Hom_k(-,k)$ denote the $k$-dual functor.

\subsection{Standing hypotheses and notation}\label{subsec:framework}

Let $M\subseteq\m$ be a nonzero ideal of $R$ with finite length as an $R$-module and suppose $\soc M=km_0$ for some $m_0$. Set $\fa=\ann_R(M)$, $B=R/\fa$, $S=R/M$, and let $\pi\colon R\to B$ be the natural projection. Since the ideal $M$ is nilpotent, we note that $\dim R=\dim S$ and $\operatorname{depth}R=0$. Note that $m_0 \in \fa$. Since $M$ has finite length, the ring $B$ is Artinian, with maximal ideal $\m_B=\m/\fa$. Since $M$ is a faithful $B$-module of type $1$, it follows from \cite[Proposition 3.2.12]{BH} that  $M\cong E_B(k)$ over $B$, where $E_B(k)$ is the injective hull of $k$. Write $\bar{(-)}:=B \otimes_R -$, and let $K$ be the minimal free resolution of $k$ over $B$, with differential $\partial$.

The hypotheses of Theorem~\ref{thm:maincriterion} fit this framework automatically. Indeed, positive dimension implies $M\subseteq\m$, and minimality of $K$ implies that the lift $\cT$ is minimal. Writing $I$ for the image of $\cT_1\to\cT_0=R$, we have $I\subseteq\m$ and
\[
0\ne\soc M\subseteq\soc R\subseteq(0:_RI)\cap\ann_R(M)=\ker\rho_0.
\]
The length hypothesis therefore gives $\soc M=\soc R=\ker\rho_0=km_0$ for some $m_0$.

In the graded setting, if we take $\w_B:=B^{\vee}$, which is isomorphic to $E_B(k)$ up to graded shift, then it follows from above that $M \cong \w_B\langle \ell \rangle$. With $\soc(M)=km_0$, it follows that $m_0 \in M_{\ell}$ and is dual to $1$ under the isomorphism $M^{\vee} \cong B\langle -\ell\rangle$. We let $\sigma:M \to k$ be the degree $-\ell$ map given as the composition $M \cong \w_B\langle \ell \rangle \to k$ where the second map is given by evaluation at $1$.

\subsection{Duality and reduction}\label{subsec:dual}
Matlis duality over $B$ makes $\mathsf D(-):=\Hom_B(-,M)$ an exact, length-preserving duality on finitely generated $B$-modules.

In the graded setting, we have $\mathsf D(X)\cong X^\vee\langle\ell\rangle$ by $f\mapsto\sigma\circ f$.

For a complex $X$ of finite free $B$-modules, evaluation induces
\begin{equation}\label{eq:homologypairing}
H_n(M\otimes_BX)\cong
\Hom_B\bigl(H^n(\Hom_B(X,B)),M\bigr),
\end{equation}
with pairing $([z],[\psi])\mapsto(1_M\otimes\psi)(z)$. When $X=K$, the $\Ext_B^n(k,B)$ are annihilated by $\m_B$, so this pairing takes values in $\soc M=km_0$ and is perfect over $k$. For a free $R$-module $F$, note that multiplication identifies $M\otimes_B\bar F$ with $MF\subseteq F$, and $\Hom_R(F,M)=\Hom_B(\bar F,M)$.

Set $(-)^*:=\Hom_R(-,R)$.

If $\cF$ is a complex of free $R$-modules concentrated in nonnegative homological degrees, then applying $\Hom_R(\cF,-)$ to the short exact sequence $0\to M\to R\to S\to0$ gives a short exact sequence of complexes
\begin{equation}\label{eq:ses}
0\longrightarrow\Hom_B(\bar\cF,M)\longrightarrow\cF^*\longrightarrow\Hom_R(\cF,S)\longrightarrow0.
\end{equation}
The first term has cohomology $\Hom_B(H_n(\bar\cF),M)$. The connecting map
\[
\delta_n\colon H^n(\Hom_R(\cF,S))\longrightarrow\Hom_B(H_{n+1}(\bar\cF),M)
\]
is computed by lifting a cocycle in $\Hom_R(\cF,S)^n$ to $\varphi\in(\cF^*)^n$. Then $\varphi d_{n+1}$ takes values in $M$ and represents the connecting class in $\Hom_B(\bar\cF,M)^{n+1}$.

In particular, if $\bar\cT\cong K$, then the map induced by $R\to S$ gives
\begin{equation}\label{eq:liftcomparison}
H^n(\Hom_R(\cT,S))\cong
\begin{cases}
H^0(\cT^*)/km_0,&n=0,\\
H^n(\cT^*),&n\ge1,
\end{cases}
\end{equation}
since $H^n(\Hom_B(K,M))$ is $km_0$ for $n=0$ and zero otherwise.

\section{A criterion for a free coresolution}\label{sec:criterion}

In this section we prove Theorem \ref{thm:maincriterion}. We keep the notation of \Cref{sec:setting}.

\begin{definition}\label{def:lift}
A \emph{lift of $K$ along $\pi$} is a complex $\cT$ of free $R$-modules with $\bar\cT\cong K$. We note $\cT_0=R$, and we write $d$ for its differential. We write $\bar x$ for the image of $x\in\cT$. Since $\pi^{-1}(\m_B)=\m$, a lift is necessarily minimal. We say the lift $\cT$ is \emph{dually small} if the induced maps
\[
\rho_m\colon H^m(\cT^*)\longrightarrow\Ext^m_B(k,B),
\qquad[\varphi]\longmapsto[\bar\varphi],
\]
are injective for $m\ge1$ and have $\ker\rho_0=k m_0$.
\end{definition}

Note the element $m_0$ always lies in $\ker\rho_0$, since $\m m_0=0$ and $\pi(m_0)=0$. \eqref{eq:liftcomparison} identifies $H^m(\Hom_R(\cT,S))$ with $H^m(\cT^*)$ for $m\ge1$ and with $H^0(\cT^*)/k m_0$ for $m=0$. Thus the dually small condition indicates the collective kernel of the $\rho_m$ is as small as possible, and in particular embeds these spaces in $\Ext_B^m(k,B)$. 

The following is the main theorem of this section:
\begin{theorem}\label{thm:U}
Suppose that $K$ has a lift $\cT$ along $\pi$ that is dually small. Then $k$ is an infinite syzygy over $R$, and is an infinite graded syzygy in the graded setting.
\end{theorem}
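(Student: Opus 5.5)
The plan is to build the coresolution directly out of the dual complex $\cT^*$, which is already a complex of finite free $R$-modules in cohomological degrees $0,1,2,\dots$ with $(\cT^*)^0=R$, and then to kill the cohomology that is left over. There are three steps, in this order.

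\textbf{Step 1: reduce to a statement about vector spaces.} By \eqref{eq:liftcomparison} together with dual smallness, each $H^m(\cT^*)$ with $m\ge1$ embeds via $\rho_m$ into $\Ext^m_B(k,B)$. That Ext module is annihilated by $\m_B$, so $H^m(\cT^*)$ is a finite-dimensional $k$-vector space. Likewise $H^0(\cT^*)=(0:_R I)$ contains $km_0=\ker\rho_0$, and $H^0(\cT^*)/km_0$ is a $k$-vector space embedded in $\Hom_B(k,B)$. The goal is therefore to enlarge $\cT^*$ so that all of these vector spaces become zero, except for the single copy $km_0\cong k$ in degree $0$.

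\textbf{Step 2: find, for each class, an element that pairs against it.} By the perfect pairing \eqref{eq:homologypairing}, every nonzero class $[\bar\varphi]\in\Ext^m_B(k,B)$ pairs nontrivially with some cycle class $[z]\in H_m(M\otimes_BK)$. Concretely this is an element $z\in M\cT_m\subseteq\cT_m$ with $\varphi(z)=m_0$. Since $dz\in M\cT_{m-1}$ reduces to zero in $M\otimes_B K$, we may hope to choose $z$ with $dz=0$ in $\cT$ itself; I will first try to arrange this, and otherwise correct $z$ using exactness of $K$. Such a $z$ defines an $R$-linear map $\cT_m^*\to R$, $\psi\mapsto\psi(z)$. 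I would use these evaluation maps to attach extra free summands to $\cT^*$ by a mapping cone, one summand for each basis vector of $\rho_m(H^m(\cT^*))$. The attaching map sends the corresponding class to $m_0$-multiples in lower degrees, so that the old class becomes a coboundary in the enlarged complex. Injectivity of $\rho_m$ for $m\ge1$ is exactly what guarantees that every surviving class in $H^m(\cT^*)$ is detected by some $z$. The condition $\ker\rho_0=km_0$ guarantees that in degree $0$ everything except $km_0$ is detected, so the resulting $H^0$ is $km_0\cong k$.

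\textbf{Step 3: check exactness and gradings.} After attaching, I would compute the cohomology of the enlarged complex $G$ using the short exact sequence \eqref{eq:ses} and its connecting maps $\delta_n$, and verify that $H^0(G)\cong k$ and $H^{m}(G)=0$ for $m\ge1$. The point to check is that each new summand is hit only through $M$, and hence through the perfect pairing, so it creates no new cohomology. In the graded setting, all choices (the elements $z$, the map $\sigma$, $m_0\in M_\ell$) are homogeneous, and the differentials can be taken of degree zero after appropriate shifts, which gives the graded statement.

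The main obstacle is Step 2/3: making the killing construction strictly compatible with the differentials, so that it does not create new cohomology in the adjacent degrees. If the naive cone fails, I would instead argue inductively that each cokernel $C^m=\operatorname{coker}(G^{m-1}\to G^m)$ is torsionless, and build $G^{m+1}$ from generators of $(C^m)^*$. The required vanishing would again be supplied by injectivity of the $\rho_m$ through \eqref{eq:liftcomparison}.
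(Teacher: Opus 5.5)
\textbf{There is a genuine gap: attaching a single free summand per class creates new cohomology, so the construction in Steps 2--3 fails.} You have the right ingredients: cycles $z\in M\cT_m$ obtained from the perfect pairing \eqref{eq:homologypairing}, and the sequence \eqref{eq:ses} to compute cohomology. But the claim that a new summand ``creates no new cohomology'' is false.

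Concretely, attaching one summand for a class $[\varphi]$ amounts to taking $\cF=\cT\oplus Re$, with $e$ in homological degree $m+1$ and $de=z$.
\begin{enumerate}
\item $z$ is automatically a cycle in $\cT$, because $\ip$ is a chain map; so your worry in Step 2 is moot.
\item In $\cF^*$ one gets $d^*\varphi=\varphi(z)e^*=m_0e^*$. So $\varphi$ stops being a cocycle; it does not become a coboundary.
\item Nothing in $\cF_{m+2}=\cT_{m+2}$ hits $e$, so $re^*$ is a cocycle for every $r\in R$.
\item Every coboundary has $e^*$-coefficient $\chi(z)\in M$.
\end{enumerate}
Hence $H^{m+1}(\cF^*)$ surjects onto $S=R/M$, which has infinite length because $\dim S=\dim R>0$. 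In the language of \eqref{eq:ses}: modulo $M$ the attaching map vanishes, so $\Hom_R(\cF,S)$ gains a summand $S$. No connecting map into the finite-length modules $\Hom_B(H_{*}(\bar\cF),M)$ can absorb it.

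\textbf{The missing idea is to attach an entire shifted copy of the lift, and to iterate.} Each class must be killed by a summand $v\otimes\cT$, glued via the DG $K$-module structure on $M\otimes_BK$: $d(v\otimes x)=(-1)^{|v|}v\otimes dx+\ip(z_v\bar x)$. Modulo $M$, this summand contributes a shifted copy of $H^*(\Hom_R(\cT,S))$. By \eqref{eq:liftcomparison} and dual smallness this is finite-dimensional, but it is again nonzero and must be killed in turn. Iterating forces the paper's complex $\cF=W\otimes_R\cT$, with $W$ the tensor algebra on the letters $\fB$.

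Moreover, when $M^2\neq0$ (as in Section~\ref{sec:node}, where $M^2=km_0$), the iterated gluing does not square to zero: $d^2(v_1v_2\otimes x)$ contains $\pm\ip(z_{v_1}\pi_K(z_{v_2})\bar x)$. The paper repairs this as follows.
\begin{enumerate}
\item Choose $c_v$ with $\partial c_v=\pi_K(z_v)$, and add the correction terms $t(s)=z_{v_1}c_{v_2}\cdots c_{v_r}$ (Lemma~\ref{lem:MC}).
\item Use the change of basis $\Theta$ of Lemma~\ref{lem:barF} to compute $H_*(\bar\cF)$.
\item Show that $\delta_n[\varphi_v^{(w)}]=(-1)^{|w|}[\eta_{wv}]$. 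This is an isomorphism because every nonempty word has a unique last letter.
\end{enumerate}

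Your fallback, proving inductively that each cokernel $C^m$ is torsionless, is just a restatement of $k$ being an infinite syzygy. The proposal gives no mechanism by which dual smallness would yield it, so it does not close the gap.
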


For the proof, fix a dually small lift $\cT$. We construct a complex $\cF$ whose $R$-dual is the required coresolution. Modulo $M$, the construction is a direct sum of shifted copies of $\cT/M\cT$. Modulo $\fa$, an explicit change of basis identifies it with a direct sum of shifted copies of $K$. Correction terms make the connecting maps of \eqref{eq:ses} isomorphisms. Proving \Cref{thm:U} will in particular require several preparatory results, established over the next few subsections.

\subsection{Products and detecting cycles}\label{subsec:products}\label{subsec:letters}
By a theorem of Gulliksen \cite{Gulliksen} and Schoeller \cite{Schoeller} (see also \cite[Theorem~6.3.5]{Avramov} cf. \cite{Tate}) $K$ is the acyclic closure of $k$ over $B$, and so in particular possesses a strictly graded-commutative dg $B$-algebra structure which is compatible with the internal degree in the graded setting.

The complex $M\otimes_BK$, with differential $\tau:=1_M\otimes\partial$, is a DG $K$-module in the usual way, and $\pi|_M$ induces a morphism of DG $K$-modules
\[
\pi_K\colon M\otimes_BK\longrightarrow K,\qquad
\mu\otimes y\longmapsto\pi(\mu)y.
\]

Multiplication gives an injective chain map
$\ip\colon M\otimes_BK\to\cT$, with $\mu\otimes\bar x\mapsto\mu x$, and with image $M\cT$. It satisfies $\overline{\ip(z)}=\pi_K(z)$ and, for $\varphi\in(\cT^*)^m$, we have
\begin{equation}\label{eq:phiiota}
\varphi(\ip(z))=(1_M\otimes\bar\varphi)(z)
\qquad(z\in M\otimes_BK_m).
\end{equation}
Indeed, $\varphi(\mu x)=\mu\varphi(x)=\mu\pi(\varphi(x))$.

For each $m\ge0$, choose cocycles $\varphi_v\in(\cT^*)^m$, indexed by a set $\fB_m$, lifting a $k$-basis of $\im\rho_m$. Under the dually small condition, their classes form a basis of $H^m(\cT^*)$ for $m>0$, and their images form a basis of $H^0(\cT^*)/km_0$ for $m=0$. Each $\fB_m$ is finite, and the classes $[\bar\varphi_v]$ are linearly independent. The duality \eqref{eq:homologypairing} gives cycles $z_v\in M\otimes_BK_m$ with
\begin{equation}\label{eq:dual}
(1_M\otimes\bar\varphi_u)(z_v)=\delta_{uv}m_0
\qquad(u,v\in\fB_m).
\end{equation}
The cycle $\pi_K(z_v)$ is a boundary in $K$: for $m>0$ this follows from exactness, and for $m=0$ from $\pi(M)\subseteq\m_B=\partial(K_1)$. For each $v$, choose $c_v\in K_{m+1}$ with $\partial c_v=\pi_K(z_v).$

Regard $\fB=\bigsqcup_m\fB_m$ as an alphabet. A letter $v\in\fB_m$ has length $|v|=m+1$, and we extend length additively to words, with $|\varnothing|=0$ for the empty word. There are finitely many words of length at most $n$ for each $n$. For a nonempty word $s=v_1\cdots v_r$, set
\[
t(s)=z_{v_1}c_{v_2}\cdots c_{v_r}\in M\otimes_BK_{|s|-1}.
\]

\begin{lemma}\label{lem:MC}
For every nonempty word $s$, we have
\[
\tau t(s)+\sum_{s=s's''}(-1)^{|s'|}
 t(s')\pi_K(t(s''))=0,
\]
where both factors in the sum are nonempty.
\end{lemma}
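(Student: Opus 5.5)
The plan is a direct Leibniz-rule computation in the DG $K$-module $M\otimes_BK$, with no induction needed. For a nonempty word $s=v_1\cdots v_r$ we have $t(s)=z_{v_1}c_{v_2}\cdots c_{v_r}$. This is the element $z_{v_1}\in M\otimes_BK_{|v_1|-1}$ acted on by the product $c_{v_2}\cdots c_{v_r}\in K$, with $c_{v_j}\in K_{|v_j|}$. We will use the following facts:
\begin{itemize}
\item the action of $K$ on $M\otimes_BK$ is $(\mu\otimes x)\cdot y=\mu\otimes xy$, so it is associative;
\item $\tau$ satisfies the Leibniz rule $\tau(z\cdot y)=\tau(z)\cdot y+(-1)^{|z|}z\cdot\partial y$, where $|z|$ is the homological degree;
\item $\partial$ is a graded derivation on $K$;
\item $\pi_K$ is a morphism of DG $K$-modules, so $\pi_K(z\cdot y)=\pi_K(z)y$.
\end{itemize}
We read the product $t(s')\pi_K(t(s''))$ in the statement as $t(s')$ acted on by the element $\pi_K(t(s''))\in K$.

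First, since $z_{v_1}$ is a cycle, Leibniz gives
\[
\tau t(s)=(-1)^{|v_1|-1}\,z_{v_1}\cdot\partial(c_{v_2}\cdots c_{v_r}).
\]
Next, expanding $\partial$ as a derivation gives
\[
\partial(c_{v_2}\cdots c_{v_r})=\sum_{j=2}^r(-1)^{|v_2|+\cdots+|v_{j-1}|}\,c_{v_2}\cdots c_{v_{j-1}}\,\partial(c_{v_j})\,c_{v_{j+1}}\cdots c_{v_r},
\]
and we substitute $\partial c_{v_j}=\pi_K(z_{v_j})$ by the choice of the $c_v$. The total sign of the $j$-th term is then $(-1)^{|v_1|+\cdots+|v_{j-1}|-1}=-(-1)^{|s'|}$, where $s'=v_1\cdots v_{j-1}$ and $s''=v_j\cdots v_r$.

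Then we identify each term with the one in the statement. By associativity,
\[
z_{v_1}\cdot(c_{v_2}\cdots c_{v_{j-1}}\,\pi_K(z_{v_j})\,c_{v_{j+1}}\cdots c_{v_r})=t(s')\cdot\bigl(\pi_K(z_{v_j})c_{v_{j+1}}\cdots c_{v_r}\bigr).
\]
Since $\pi_K$ is $K$-linear, $\pi_K(z_{v_j})c_{v_{j+1}}\cdots c_{v_r}=\pi_K(t(s''))$. As $j$ runs from $2$ to $r$, the pair $(s',s'')$ runs over exactly the factorizations $s=s's''$ with both factors nonempty. Hence
\[
\tau t(s)=-\sum_{s=s's''}(-1)^{|s'|}t(s')\pi_K(t(s'')),
\]
which is the claim. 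For $r=1$ the sum is empty and the identity reduces to $\tau z_{v_1}=0$.

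The only delicate point is the sign bookkeeping. I have to be consistent about whether $K$ acts on the left or right of $M\otimes_BK$, and about the degree convention $|v|=m+1$ versus $\deg z_v=m$ and $\deg c_v=m+1$. Strict graded-commutativity of $K$, from the Gulliksen--Schoeller acyclic closure structure, lets me move factors freely if a different ordering convention is used, at the cost of Koszul signs. I would fix right multiplication as above, since then no commutation is needed at all.
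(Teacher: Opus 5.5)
Your proposal is correct and is the paper's argument: apply the Leibniz rule to $z_{v_1}c_{v_2}\cdots c_{v_r}$ using that $z_{v_1}$ is a cycle, substitute $\partial c_{v_i}=\pi_K(z_{v_i})$, and identify each resulting term with $t(s')\pi_K(t(s''))$ for the cut $s'=v_1\cdots v_{i-1}$, $s''=v_i\cdots v_r$. Your sign $-(-1)^{|s'|}$ agrees with the paper's. You also spell out two steps the paper leaves implicit: associativity of the right action, and the $K$-linearity $\pi_K(z\cdot y)=\pi_K(z)y$.
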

\begin{proof}
Write $s=v_1\cdots v_r$. Since $z_{v_1}$ is a cycle, the Leibniz rule gives
\[
\tau t(s)=-\sum_{i=2}^r(-1)^{|v_1\cdots v_{i-1}|}
 z_{v_1}c_{v_2}\cdots c_{v_{i-1}}\pi_K(z_{v_i})c_{v_{i+1}}\cdots c_{v_r}.
\]
For the cut $s' =v_1\cdots v_{i-1}$, $s''=v_i\cdots v_r$, the product in this sum is $t(s')\pi_K(t(s''))$.
\end{proof}

\subsection{The complex}\label{subsec:complex}
Let $V$ be the free $R$-module with basis $\fB$, placing $v$ in homological degree $|v|$. The tensor algebra
\[
W=T_R(V)=\bigoplus_{r\ge0}V^{\otimes_R r}
\]
has the words in $\fB$ as an $R$-basis, with the empty word corresponding to $1$. Set $\cF=W\otimes_R\cT$ as a tensor product of homologically graded $R$-modules, so that
\[
\cF_n \cong \bigoplus_{|w|\le n}\cT_{n-|w|}.
\]
Define
\begin{equation}\label{eq:d}
d(w\otimes x)=(-1)^{|w|}w\otimes dx
 +\sum_{w=w's}(-1)^{|w'|}w'\otimes\ip(t(s)\bar x),
\end{equation}
where $s$ is nonempty. The first term is the ordinary tensor-product differential when $W$ has zero differential, while the sum can be thought of as adding correction terms to this differential.

\begin{lemma}\label{lem:complex}
The maps \eqref{eq:d} make $\cF$ a minimal complex of finite free $R$-modules, with $\cF_0=R$.
\end{lemma}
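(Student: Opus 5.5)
We verify three things about \eqref{eq:d}: that it lowers homological degree by one and is well defined on finite free modules, that $d^2=0$, and that it is minimal with $\cF_0=R$.

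\emph{Degrees, finiteness and $\cF_0$.} For $w\otimes x$ with $x\in\cT_j$, the term $w\otimes dx$ has degree $|w|+j-1$. For a cut $w=w's$ we have $t(s)\in M\otimes_BK_{|s|-1}$, and using the product on $K$, $t(s)\bar x\in M\otimes_BK_{|s|-1+j}$. Hence $w'\otimes\ip(t(s)\bar x)$ has degree $|w'|+|s|-1+j$, again one less than $|w|+j$. Since every letter has length at least $1$, only finitely many words satisfy $|w|\le n$. So each $\cF_n$ is finite free, and $\cF_0=\varnothing\otimes\cT_0=R$.

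\emph{Minimality.} The term $dx$ lies in $\m\cT$ because $\cT$ is minimal. The correction terms lie in $\ip(M\otimes_BK)=M\cT\subseteq\m\cT$, since $M\subseteq\m$. So $d(\cF)\subseteq\m\cF$.

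\emph{$d^2=0$.} Write $d=d_0+d_1$, where $d_0(w\otimes x)=(-1)^{|w|}w\otimes dx$ and $d_1$ is the correction sum. We need $d_0^2+(d_0d_1+d_1d_0)+d_1^2=0$.

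The first term vanishes because $d^2=0$ on $\cT$.

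For the mixed terms, compute $d_0d_1(w\otimes x)=\sum_{w=w's}(-1)^{|w'|}(-1)^{|w'|}w'\otimes d\,\ip(t(s)\bar x)$. Since $\ip$ is a chain map, this equals $\sum w'\otimes\ip(\tau(t(s)\bar x))$. Apply Leibniz in the DG $K$-module $M\otimes_BK$:
\[
\tau(t(s)\bar x)=\tau(t(s))\bar x+(-1)^{|s|-1}t(s)\,\partial\bar x.
\]
Here $\overline{dx}=\partial\bar x$ because $\bar\cT\cong K$ as complexes. We should check that this identification respects the DG structure, or at least use only that $K$ acts on $M\otimes_BK$ and $\ip(t\bar x)$ depends on $\bar x$. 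On the other side, $d_1d_0(w\otimes x)=(-1)^{|w|}\sum(-1)^{|w'|}w'\otimes\ip(t(s)\partial\bar x)$. Since $|w|=|w'|+|s|$, the sign is $(-1)^{|s|}$, which cancels the second Leibniz term. What remains of the mixed terms is $\sum_{w=w's}w'\otimes\ip(\tau t(s)\,\bar x)$.

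For $d_1^2$, apply $d_1$ to $w'\otimes\ip(t(s)\bar x)$. Using $\overline{\ip(y)}=\pi_K(y)$, this gives
\[
\sum_{w=w''s's}(-1)^{|w''|}(-1)^{|w'|}\,w''\otimes\ip\bigl(t(s')\,\pi_K(t(s))\,\bar x\bigr),
\]
where $w'=w''s'$. The sign is $(-1)^{|s'|}$. Grouping by $w''$ and the suffix $s'':=s's$, we get
\[
\sum w''\otimes\ip\Bigl[\Bigl(\tau t(s'')+\sum_{s''=s's}(-1)^{|s'|}t(s')\pi_K(t(s))\Bigr)\bar x\Bigr],
\]
where the $\tau$ term comes from the mixed terms. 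This vanishes by \Cref{lem:MC}.

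The main obstacle is the sign and associativity bookkeeping. We must confirm that the module action satisfies $(t(s')\pi_K(t(s)))\bar x=t(s')(\pi_K(t(s))\bar x)$, which follows from $M\otimes_BK$ being a DG $K$-module. We must also confirm that Leibniz signs are taken with respect to the degree $|s|-1$ of $t(s)$, not $|s|$. I would first fix these conventions on a two-letter word and then verify the general identity.
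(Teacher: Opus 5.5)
Your proof is correct and follows the paper's argument. The paper likewise computes the component of $d^2(w\otimes x)$ on each proper prefix, cancels the two $\partial\bar x$ terms by the Leibniz rule (using that $\ip$ is a chain map and $\overline{\ip(z)}=\pi_K(z)$), kills the remainder with Lemma~\ref{lem:MC}, and notes that the component on $w$ itself is $d^2x=0$; your splitting $d=d_0+d_1$ is only a reorganization of that bookkeeping, and your signs and degree conventions (Leibniz with respect to $|s|-1$) are already right, so the closing hedge is unnecessary.
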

\begin{proof}
The maps are $R$-linear and lower homological degree by one. Their entries lie in $\m$, since $\cT$ is minimal and the correction terms lie in $M\cT$.

Fix $w=w'u$ with $u$ nonempty. The component of $d^2(w\otimes x)$ on $w'$ is $\ip$ applied to
\[
(-1)^{|u|}t(u)\partial\bar x+\tau(t(u)\bar x)
 +\sum_{u=s's}(-1)^{|s'|}t(s')\pi_K(t(s))\bar x.
\]
Here we used that $\ip$ is a chain map and $\overline{\ip(z)}=\pi_K(z)$. The Leibniz rule cancels the two terms containing $\partial\bar x$, and Lemma~\ref{lem:MC} cancels the rest. The component on $w$ is $d^2x=0$.
\end{proof}

\subsection{Reduction and dual exactness}\label{subsec:connecting}
Reduction modulo $\fa$ gives
\[
\bar d(w\otimes y)=(-1)^{|w|}w\otimes\partial y
 +\sum_{w=w's}(-1)^{|w'|}w'\otimes\pi_K(t(s))y.
\]
Let $\cG=\bigoplus_w\Sigma^{|w|}K$, where $\Sigma$ denotes homological shift, and write $w\otimes y$ for $y$ in the $w$-summand. Its differential is $d'(w\otimes y)=(-1)^{|w|}w\otimes\partial y$. The following change of basis identifies $\cG$ with $\bar\cF$:

\begin{lemma}\label{lem:barF}
We have the following:
\begin{enumerate}
\item[$(1)$] The map
\[
\Theta(wv\otimes y)=wv\otimes y-w\otimes c_vy,
\qquad \Theta(\varnothing\otimes y)=\varnothing\otimes y,
\]
where $v$ is a letter, is a chain isomorphism $\Theta\colon\cG\to\bar\cF$.
\item[$(2)$] Set $\zeta_\varnothing=\varnothing\otimes1$ and
\[
\zeta_{wv}=wv\otimes1-w\otimes c_v.
\]
The classes $[\zeta_w]$ with $|w|=n$ form a $k$-basis of $H_n(\bar\cF)$.
\item[$(3)$] For $|w|=n$, let $\eta_w\colon\bar\cF_n\to M$ be supported on $w\otimes K_0$ and send $w\otimes1$ to $m_0$. The classes $[\eta_w]$ form a basis of $H^n(\Hom_B(\bar\cF,M))$. The coefficient $\lambda_w\in k$ of $[\eta_w]$ in the class of a cocycle $\xi$ is determined by $\xi(\zeta_w)=\lambda_wm_0$.
\end{enumerate}
\end{lemma}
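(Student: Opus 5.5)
For part (1), I will compare $\bar d\Theta$ and $\Theta d'$ on a basis element $w\otimes y$ of $\cG$. The structure of $\Theta$ makes it an isomorphism: it is the identity plus terms that strictly shorten words. So it is unitriangular with respect to word length and hence bijective on each $\cG_n$, which is a finite direct sum. It is also $B$-linear and preserves internal degree in the graded setting, since each $c_v$ is homogeneous. Choosing $c_v$ homogeneous is possible because $z_v$ can be chosen homogeneous via the graded pairing.

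The main step is the chain map identity. Write $w=w_0v$ with $v$ a letter. Then
\[
\Theta d'(w\otimes y)=(-1)^{|w|}\bigl(w\otimes\partial y-w_0\otimes c_v\partial y\bigr).
\]
Next apply $\bar d$ to $w\otimes y-w_0\otimes c_vy$. The leading terms are
\[
(-1)^{|w|}w\otimes\partial y-(-1)^{|w_0|}w_0\otimes\partial(c_vy).
\]
Expanding $\partial(c_vy)=\pi_K(z_v)y+(-1)^{|v|}c_v\partial y$ with $|c_v|=|v|$, the $c_v\partial y$ term matches $\Theta d'$. This leaves $-(-1)^{|w_0|}w_0\otimes\pi_K(z_v)y$.

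The correction terms come from the factorizations $w=w's$ in $\bar d(w\otimes y)$ and $w_0=w's$ in $\bar d(w_0\otimes c_vy)$. The cut with $s=v$ contributes $(-1)^{|w_0|}w_0\otimes\pi_K(z_v)y$, which cancels the leftover term. Every other cut has the form $s=s_0v$ with $s_0$ nonempty. There $t(s)=t(s_0)c_v$, so $\pi_K(t(s))y=\pi_K(t(s_0))c_vy$, and this cancels the matching cut term from $-\bar d(w_0\otimes c_vy)$ with the same sign $(-1)^{|w'|}$.

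The one thing to check is that $\pi_K$ is multiplicative against $K$. This holds because $\pi_K$ is a morphism of DG $K$-modules, so $\pi_K(zc)=\pi_K(z)c$. Graded-commutativity does not enter, because the products are always taken in the same order. I expect sign bookkeeping in this cancellation to be the only real obstacle. I will do it by isolating the last letter as above rather than expanding over all words.

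For part (2), $H_n(\cG)=\bigoplus_{|w|=n}H_0(K)\,w=\bigoplus_{|w|=n}k\,(w\otimes 1)$, since $K$ resolves $k$ and the $w$-summand is shifted by $|w|$. The isomorphism $\Theta$ sends $w\otimes1$ to $\zeta_w$, so the classes $[\zeta_w]$ with $|w|=n$ form a basis.

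For part (3), $\Hom_B(\bar\cF,M)\cong\Hom_B(\cG,M)$ via $\Theta$, and this is $\prod_w\Sigma^{-|w|}\Hom_B(K,M)$, whose cohomology in degree $n$ is $\bigoplus_{|w|=n}H^0(\Hom_B(K,M))=\bigoplus_{|w|=n}km_0$. This is the same fact used in \eqref{eq:liftcomparison}. On $\cG$, the cocycle sending $w\otimes1\mapsto m_0$ and vanishing elsewhere represents the basis class. Transporting it by $\Theta^{-1}$, it agrees with $\eta_w$ on $w\otimes K_0$. I will check that $\eta_w$ is itself a cocycle: $\eta_w\bar d$ only sees $w\otimes K_0$-components of images, which arise from $w\otimes\partial K_1\subseteq w\otimes\m_B$ and from correction terms lying in $\m_B$, and $\m_B m_0=0$. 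Finally, the pairing $H^n(\Hom_B(\bar\cF,M))\times H_n(\bar\cF)\to M$ satisfies $\eta_u(\zeta_w)=\delta_{uw}m_0$. The only contribution is $w\otimes1$ when $u=w$, since the other component of $\zeta_w$ lies in a different word summand. Because the pairing is well defined on classes, this gives the coefficient formula $\xi(\zeta_w)=\lambda_wm_0$.
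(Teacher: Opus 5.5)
Your proposal is correct and follows essentially the paper's argument. In (1) you isolate the last letter, cancel the suffix-$v$ cut against $\partial c_v=\pi_K(z_v)$, cancel the remaining cuts in pairs via $t(s_0v)=t(s_0)c_v$, and invert $\Theta$ by unitriangularity; in (3) you get the cocycle property from minimality and the basis and coefficient formula from the pairing $\eta_u(\zeta_w)=\delta_{uw}m_0$. The only cosmetic difference is that you compute $\dim_k H^n(\Hom_B(\bar\cF,M))$ by transporting along $\Theta$ and using injectivity of $M\cong E_B(k)$, where the paper cites Matlis duality together with part (2), and these amount to the same thing.
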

\begin{proof}
(1) For $y\in K_j$, both terms defining $\Theta(wv\otimes y)$ have homological degree $|wv|+j$. In $\bar d\Theta(wv\otimes y)$, the term arising from the cut with suffix $v$ cancels the term containing $\partial c_v=\pi_K(z_v)$. The remaining correction terms cancel in pairs because $t(sv)=t(s)c_v$. The Leibniz rule leaves
\[
\bar d\Theta(wv\otimes y)
 =(-1)^{|wv|}\bigl(wv\otimes\partial y-w\otimes c_v\partial y\bigr)
 =\Theta d'(wv\otimes y).
\]
The assertion for the empty word is immediate. Under the natural identification of the underlying homologically graded modules, write $\Theta=1-C$. The map $C$ reduces the number of letters, so $\sum_{j\ge0}C^j$ is a well-defined inverse, finite on each element.

(2) The homology of $\cG$ has basis $[w\otimes1]$, and $\Theta(w\otimes1)=\zeta_w$, so the claim follows.

(3) The complex $\bar\cF$ is minimal and $\m_Bm_0=0$, so each $\eta_w$ is a cocycle. For words of length $n$, one has $\eta_{w'}(\zeta_w)=\delta_{ww'}m_0$, as the correction in $\zeta_w$ lies on a shorter word. By part (2) and Matlis duality, these cocycles give a basis, with the stated coefficient formula.
\end{proof}

For $\varphi\in(\cT^*)^j$, let $\varphi^{(w)}\in(\cF^*)^{|w|+j}$ be supported on the $w$-summand and equal there to $\varphi$.
\begin{lemma}\label{lem:connecting}
The images of $\varphi_v^{(w)}$, with $|w|+|v|=n+1$, represent a basis of $H^n(\Hom_R(\cF,S))$. On this basis, the connecting map of \eqref{eq:ses} is
\begin{equation}\label{eq:connectingletters}
\delta_n[\varphi_v^{(w)}]=(-1)^{|w|}[\eta_{wv}].
\end{equation}
\end{lemma}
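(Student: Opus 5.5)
The plan is to compute everything modulo $M$ first, where the correction terms of \eqref{eq:d} disappear, and then to evaluate the connecting map on the dual basis of Lemma~\ref{lem:barF}(3).

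\emph{Basis of $H^n(\Hom_R(\cF,S))$.} Every correction term in \eqref{eq:d} lies in $\ip(M\otimes_B K)=M\cT$. Hence $\cF/M\cF\cong\bigoplus_w\Sigma^{|w|}(\cT/M\cT)$, with differential $(-1)^{|w|}w\otimes d$ on each summand. Since $\Hom_R(\cF,S)=\Hom_R(\cF/M\cF,S)$, this gives a decomposition
\[
H^n(\Hom_R(\cF,S))\cong\bigoplus_w H^{n-|w|}(\Hom_R(\cT,S)),
\]
with the signs absorbed summandwise. By \eqref{eq:liftcomparison} and the dually small hypothesis, the images in $\Hom_R(\cT,S)$ of the $\varphi_v$ with $v\in\fB_j$ form a basis of $H^j(\Hom_R(\cT,S))$. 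This holds for all $j\ge0$: the class $km_0$ dies automatically because $m_0\in M$. Each $\varphi_v$ is a cocycle in $\cT^*$, so $\varphi_v^{(w)}$ reduces to a cocycle in the $w$-summand. A letter $v\in\fB_j$ has $j=|v|-1$, so the degree condition $|w|+j=n$ becomes $|w|+|v|=n+1$. This proves the first claim.

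\emph{Connecting map.} As explained after \eqref{eq:ses}, $\varphi_v^{(w)}\in(\cF^*)^n$ is itself a lift of its image, so $\delta_n[\varphi_v^{(w)}]$ is represented by $\xi:=\varphi_v^{(w)}\circ d$. We evaluate $\xi$ on a generator $w''\otimes x$ using \eqref{eq:d}.
\begin{itemize}
\item The tensor term contributes only when $w''=w$, and it gives $\pm\varphi_v(dx)=0$ because $\varphi_v$ is a cocycle.
\item A correction term contributes only when $w''=ws$ with $s$ nonempty. By \eqref{eq:phiiota} its value is $(-1)^{|w|}(1_M\otimes\bar\varphi_v)(t(s)\bar x)$.
\end{itemize}
By Lemma~\ref{lem:barF}(3), the class of $\xi$ is determined by the scalars $\xi(\zeta_{w'})$ for $|w'|=n+1$. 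Write $w'=w_0v'$, so that $\zeta_{w'}=w'\otimes1-w_0\otimes c_{v'}$. Both pieces can contribute only if $w'=ws_1$ for some nonempty $s_1$ with $|s_1|=|v|$.
\begin{itemize}
\item Suppose $s_1=s_2v'$ with $s_2$ nonempty. The two contributions are $(1_M\otimes\bar\varphi_v)(t(s_1))$ and $-(1_M\otimes\bar\varphi_v)(t(s_2)c_{v'})$. They cancel, since $t(s_2v')=t(s_2)c_{v'}$.
\item Suppose instead $s_1=v'$ is a single letter. The second piece sits on the summand $w_0=w$, where only the tensor term contributes, and that term vanishes. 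So only $(-1)^{|w|}(1_M\otimes\bar\varphi_v)(z_{v'})$ remains.
\end{itemize}
By \eqref{eq:dual} this equals $(-1)^{|w|}\delta_{vv'}m_0$, using that $|v'|=|v|$ puts both letters in the same $\fB_m$. Hence $\xi(\zeta_{w'})=(-1)^{|w|}m_0$ if $w'=wv$ and $0$ otherwise. Lemma~\ref{lem:barF}(3) then gives \eqref{eq:connectingletters}.

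The main obstacle is the bookkeeping in this last step. One must check the telescoping cancellation between the two pieces of $\zeta_{w'}$ carefully, keep track of the signs $(-1)^{|w'|}$ in \eqref{eq:d} relative to the sign convention for $\delta_n$, and confirm the degree constraints that force $|v'|=|v|$.
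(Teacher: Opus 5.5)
Your proposal is correct and follows essentially the same route as the paper's proof. You reduce modulo $M$ to split $\Hom_R(\cF,S)$ into shifted copies of $\Hom_R(\cT,S)$, then represent $\delta_n[\varphi_v^{(w)}]$ by $\varphi_v^{(w)}d$ and evaluate it on the $\zeta_{pq}$. There the telescoping identity $t(sq)=t(s)c_q$ kills the longer cuts, and the pairing \eqref{eq:dual} handles the single-letter case, exactly as in the paper.
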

\begin{proof}
Modulo $M$, the correction terms of \eqref{eq:d} vanish. Thus $\Hom_R(\cF,S)$ is the direct sum of the shifted complexes $\Hom_R(\cT,S)$, and the basis assertion follows from \eqref{eq:liftcomparison} and the choice of the $\varphi_v$.

The connecting class is represented by $\xi=\varphi_v^{(w)}d\in\Hom_B(\bar\cF,M)^{n+1}$. Since $\varphi_v$ is a cocycle, $\xi$ vanishes outside words $ws$ with $s$ nonempty. On that summand its value at $ws\otimes y$ is
\[
(-1)^{|w|}(1_M\otimes\bar\varphi_v)(t(s)y).
\]
Evaluate on $\zeta_{pq}=pq\otimes1-p\otimes c_q$, where $q$ is a letter and $|pq|=n+1$. If $p=ws$ with $s$ nonempty, the two contributions cancel as $t(sq)=t(s)c_q$. If $p=w$, only the first term contributes, and \eqref{eq:dual} gives
\[
\xi(\zeta_{wq})=(-1)^{|w|}\delta_{vq}m_0.
\]
All other cases give zero, and Lemma~\ref{lem:barF}(3) proves \eqref{eq:connectingletters}.
\end{proof}
We are now ready for the proof of Theorem~\ref{thm:U} which also covers Theorem \ref{thm:maincriterion} from the intro:
\begin{proof}[Proof of Theorem~\ref{thm:U}]
Every nonempty word has a unique final letter, so \eqref{eq:connectingletters} makes each $\delta_n$ an isomorphism. The long exact sequence of \eqref{eq:ses} gives $H^n(\cF^*)=0$ for $n\ge1$ and identifies $H^0(\cF^*)$ with the image of
$H^0(\Hom_B(\bar\cF,M))=k[\eta_\varnothing]$. This image is $k m_0\subseteq\cF_0^*=R$. Taking $G^n=\cF_n^*$ gives the desired coresolution, which is minimal because $\cF$ is minimal. 

In the graded setting, the coresolution is graded provided we choose the $\varphi_v$, $z_v$, and $c_v$ homogeneously. The perfect pairing gives $\deg z_v=\ell-\deg\varphi_v$, and we choose $\deg c_v=\deg z_v$. Give each letter weight $\|v\|=\deg z_v$ and extend weight additively to words. Then $\deg t(s)=\|s\|$, and the word construction, with these internal degrees, has
\[
\cF_n=\bigoplus_{|w|\le n}\cT_{n-|w|}\langle\|w\|\rangle 
\qquad \text{with} \qquad G^n=\cF_n^*.
\]
With these internal degrees, $\cG=\bigoplus_w(\Sigma^{|w|}K)\langle\|w\|\rangle$. Every term of \eqref{eq:d} has internal degree zero, as does the change of basis $\Theta$. The element $\varphi^{(w)}$ has internal degree $\deg\varphi-\|w\|$, and the classes $[\zeta_w]$ and $[\eta_w]$ have degrees $\|w\|$ and $\ell-\|w\|$, respectively. Thus all the preceding identifications respect the internal grading.
\end{proof}

\begin{remark}\label{rem:untwisted}
If $M^2=0$, then $\pi_K$ vanishes on $M\otimes_BK$, so we may take every $c_v=0$. Thus $t$ vanishes on words with at least two letters, $\Theta$ is the identity, and the sum in \eqref{eq:d} has at most one term. For nonzero $M^2$, the longer-word terms enforce $d^2=0$, while the explicit change of basis above makes the connecting maps just as simple.
\end{remark}

\subsection{A partial converse}\label{subsec:converse}
We maintain the hypotheses of \Cref{subsec:framework}. This subsection extracts some necessary conditions for $k$ to be a third syzygy that will be needed in the next section.

\begin{proposition}\label{prop:necU}
Suppose that $\operatorname{depth}S>0$. If $k$ is a third syzygy over $R$, then there is an ideal $I\subseteq\m$, homogeneous in the graded setting, such that
\[
I+\fa=\m\qquad\text{and}\qquad I\cap\fa=I\fa.
\]
If moreover $\m_B^2=0$, then $\m^2=\fa\m$.
\end{proposition}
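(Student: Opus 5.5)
The plan is to pass from the abstract coresolution to a concrete ideal $I$ by reducing the embedding of $k$ to a single copy of $R$. Then I read off the two conditions from exactness at $G^0$ and at $G^1$, and finish with a short computation for the last claim.

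\emph{Step 1: locating $k$ inside $R$.} First I show $\soc R=km_0$. If $r\in R$ satisfies $\m r=0$, its image in $S=R/M$ is killed by $\m$. Since $\operatorname{depth}S>0$, that image is $0$, so $r\in M$. Hence $\soc R=\soc M=km_0$.

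Now take an exact sequence $0\to k\to G^0\xrightarrow{f}G^1\xrightarrow{g}G^2$ with the $G^i$ free, which exists since $k$ is a third syzygy. The image of $1\in k$ lies in $\soc G^0=m_0G^0$, so it has the form $m_0a$ for some $a\in G^0$. If every coordinate of $a$ lay in $\m$, then $m_0a=0$, which is impossible. So $a$ is part of a basis, and after a change of basis we may take $a=e_1$. Splitting off the remaining summands of $G^0$ into the cokernel, which only adds a free summand and does not affect the syzygy property, I expect to reduce to $G^0=R$.

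With $G^0=R$, write $f(1)=(f_1,\dots,f_r)$ and set $I=(f_1,\dots,f_r)$, homogeneous in the graded setting. Exactness at $G^0$ says exactly $(0:_RI)=km_0$. Since $m_0\neq0$ is killed by $I$, we get $I\subseteq\ann_R(m_0)=\m$.

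\emph{Step 2: $I+\fa=\m$.} We have $(0:_MI)\subseteq(0:_RI)=km_0$, so $(0:_MI)=km_0$. On the other hand, $(0:_MI)=\Hom_B(B/IB,M)=\mathsf D(B/IB)$. Matlis duality over $B$ preserves length, so $B/IB$ has length one. Thus $IB=\m_B$, that is, $I+\fa=\m$.

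\emph{Step 3: $I\cap\fa=I\fa$.} This is where exactness at $G^1$ must enter, and I expect it to be the main obstacle. The target is equivalent to $\operatorname{Tor}_1^R(R/I,B)=0$, since $(I\cap\fa)/I\fa\cong\operatorname{Tor}_1^R(R/I,R/\fa)$. The cokernel $C$ of $f$ embeds in $G^2$, so it is torsionless, and $I$ is the image of the dual map $(G^1)^*\to R^*=R$.

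My first attempt is direct. Take $x=\sum r_if_i\in\fa$. Since $xM=0$, the vector $(r_i)$ gives a functional on $G^1$ whose restriction to $MG^1$ annihilates the image of $M\cdot f(1)$. I would then use that $C$ is torsionless, so that functionals on $G^1$ vanishing on $\im f$ separate the points of $C$. Combining this with Matlis duality over $B$, as in \eqref{eq:homologypairing}, applied to $M\otimes_R(R\to G^1\to G^2)$, the aim is to correct $(r_i)$ by a syzygy of $(f_i)$ so that all $r_i$ lie in $\fa$. This would show $x\in I\fa$.

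If that fails, the fallback is to apply $\Hom_R(-,M)$ to the coresolution and compare with the short exact sequence \eqref{eq:ses}, exploiting again that $\soc R\subseteq M$.

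\emph{Step 4: the final claim.} Suppose $\m_B^2=0$, so $\m^2\subseteq\fa$. Then
\[
\m^2=(I+\fa)^2\subseteq I^2+\fa\m .
\]
Moreover $I^2\subseteq\m^2\cap I\subseteq\fa\cap I=I\fa\subseteq\fa\m$, so $\m^2\subseteq\fa\m$. The reverse containment $\fa\m\subseteq\m^2$ is clear because $\fa\subseteq\m$. Hence $\m^2=\fa\m$.
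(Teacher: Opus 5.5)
Your Steps 2 and 4 are correct and are essentially the paper's argument. Step 3, however, is the real content of the proposition, and it is not proved: you describe two possible strategies without carrying either out. A warning sign is that neither sketch uses $\operatorname{depth}S>0$ at this point. In Step 1 that hypothesis only gives you $\soc R=km_0$, yet it is exactly what controls $I\cap\fa$.

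The missing idea is the connecting map of \eqref{eq:ses}. Let $\cF$ be the $R$-dual of $R\to G^1\to G^2$, so that $\cF_0=R$, $d(\cF_1)=I$, and $\cF^*\cong G$; thus $H^0(\cF^*)=km_0$ and $H^1(\cF^*)=0$. Consider the long exact sequence of \eqref{eq:ses}. The map $H^0(\Hom_B(\bar\cF,M))=(0:_MI)=km_0\to H^0(\cF^*)=km_0$ is an isomorphism by your Step 2. Together with $H^1(\cF^*)=0$, this makes the connecting map
\[
\delta_0\colon(0:_SIS)=H^0(\Hom_R(\cF,S))\longrightarrow\Hom_B(H_1(\bar\cF),M)
\]
an isomorphism. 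Its target has finite length, so $(0:_SIS)$ is a finite-length ideal of $S$, and it vanishes because $\operatorname{depth}S>0$. Matlis duality then gives $H_1(\bar\cF)=0$. Now suppose $x=d(y)\in\fa$ with $y\in\cF_1$. Then $\bar y$ is a cycle of $\bar\cF$, hence a boundary, so $y\in\im d_2+\fa\cF_1$ and $x\in d(\fa\cF_1)=\fa I$. In other words, the module $H_1(\bar\cF)$ whose vanishing yields $I\cap\fa=I\fa$ is Matlis dual to $(0:_SIS)$. This is why torsionlessness of the cokernel $C$ alone, as in your direct attempt, cannot finish the job without bringing in the depth of $S$.

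A smaller point: the reduction to $G^0=R$ in Step 1 is not justified as written. Discarding the summand $G'$ replaces the cokernel of $f$ by an extension of $\im g$ by the free module $G'$. Such extensions need not be torsionless, so ``does not affect the syzygy property'' needs proof. The clean fix, which is what the paper obtains from \cite[Lemma~2.8]{MRS}, is to take the sequence minimal. Dualize, split off contractible summands $R\xrightarrow{1}R$ from the resulting finite free complex (this does not change $H^0$ or $H^1$ of its dual), and dualize back. Then $f$ has entries in $\m$, so $\soc G^0\subseteq\ker f\cong k$, and $\soc R=km_0$ forces $\operatorname{rank}G^0=1$.
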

\begin{proof}
An exact sequence witnessing that $k$ is a third syzygy may be chosen minimal, and \cite[Lemma~2.8]{MRS} implies that, after an overall shift in the graded setting, it has the form
\[
0\longrightarrow km_0\hookrightarrow G^0
\longrightarrow G^1\longrightarrow G^2,
\]
with $G^0=R$ and each $G^i$ free. Let $\cF$ be the $R$-dual of $G$. Then $\cF_0=R$, $H^0(\cF^*)=km_0$, and $H^1(\cF^*)=0$.

Set $I=d(\cF_1)\subseteq\m$ and let $IS$ be its image in $S$. Since $(0:_RI)=H^0(\cF^*)=km_0\subseteq M$, we have
\[
\Hom_B(B/\pi(I),M)=(0:_MI)=km_0.
\]
Duality gives $B/\pi(I)\cong k$, proving $I+\fa=\m$. The map $H^0(\Hom_B(\bar\cF,M))\to H^0(\cF^*)$ in \eqref{eq:ses} is therefore an isomorphism. Since $H^1(\cF^*)=0$, the relevant long exact sequence gives
\[
\delta_0\colon(0:_SIS)\xrightarrow{\ \cong\ }\Hom_B(H_1(\bar\cF),M).
\]
The displayed isomorphism shows that the ideal $(0:_SIS)$ has finite length, so it vanishes since $\operatorname{depth}S>0$. Duality therefore gives $H_1(\bar\cF)=0$. Thus $d(y)\in\fa$ implies $y\in\im d_2+\fa\cF_1$, whence $d(y)\in I\fa$, which proves $I\cap\fa=I\fa$.

Finally, if $\m_B^2=0$, then $\m^2\subseteq\fa$, so
\[
I\m\subseteq I\cap\fa=I\fa\subseteq\fa\m,
\qquad\text{and hence}\qquad
\m^2=(I+\fa)\m=\fa\m.
\]
\end{proof}

\section{Twisted trivial extensions of the node}\label{sec:node}

In this section we prove Theorem~\ref{thm:node}.

\subsection{\texorpdfstring{The rings $R(q)$}{The rings R(q)}}\label{subsec:Rq}
Fix a symmetric $2\times2$ matrix $q=(q_{ij})$ over $k$ with $\det q\ne0$. Write $x_1=x$, $x_2=y$, and set
\[
R(q)=k[u,v,x,y]\big/\bigl(ux,\;uy,\;vx,\;vy,\;x_ix_j-q_{ij}\,uv\ (1\le i\le j\le2)\bigr),
\]
and set $m_0=uv$.

The defining relations give the graded pieces
\begin{equation}\label{eq:Rqbasis}
\begin{aligned}
R(q)_0&=k,& R(q)_1&=ku\oplus kv\oplus kx\oplus ky,\\
R(q)_2&=ku^2\oplus kv^2\oplus km_0,&
R(q)_n&=ku^n\oplus kv^n\quad(n\ge3).
\end{aligned}
\end{equation}
Abbreviating $R:=R(q)$, we have
\[
\begin{gathered}
M=(x,y)=kx\oplus ky\oplus km_0,\qquad \ann_R(M)=(u,v),\\
B=R/(u,v)=k[x,y]/(x,y)^2,\qquad \text{and} \quad S=R/M=k[u,v]/(uv).
\end{gathered}
\]
The coefficient of $m_0$ in $r\mu$ defines a pairing $B\times M\to k$ with matrix $1\oplus q$ in the bases $(1,\bar x,\bar y)$ of $B$ and $(m_0,x,y)$ of $M$, so $M\cong\omega_B\langle2\rangle$. Thus the graded hypotheses of \Cref{subsec:framework} hold with $\ell=2$. Since $M$ is nilpotent and $\m m_0=0$, the ring $R$ has dimension one and depth zero.

These rings are twisted trivial extensions of the node. To see this, present $S=k[U,V]/(UV)$, put $B'=S/(U,V)^2$, and let $N=\omega_{B'}\langle2\rangle$. Choose basis elements $X,Y$ in degree one dual to $\bar U,\bar V$, and let $z$ in degree two be dual to $1$, so that
\[
UX=VY=z,\qquad UY=VX=0.
\]
A \emph{twist} is a symmetric form $b=(b_{ij})$ on $N_1=kX\oplus kY$, extended to $N$ by $b(z,N)=0$. The \emph{twisted trivial extension} $R_b=S\ltimes_bN$ is $S\oplus N$ as an $S$-module, with multiplication
\[
(s,\mu)(s',\mu')=\bigl(ss',\;s\mu'+s'\mu+b(\mu,\mu')z\bigr).
\]
Thus $X^2=b_{11}z$, $XY=b_{12}z$ and $Y^2=b_{22}z$; when $b=0$, this is the usual trivial extension. The ring $R_b$ is standard graded, with $N$ a finite-length ideal satisfying $N^2\subseteq kz$, $N^3=0$ and $R_b/N\cong S$. Its action on $N$ is given by the graded projection $\pi\colon R_b\to B'$ with $\pi(z)=0$ and
\[
\begin{aligned}
\pi(U)&=\bar U,&\qquad \pi(V)&=\bar V,\\
\pi(X)&=b_{11}\bar U+b_{12}\bar V,&
\pi(Y)&=b_{12}\bar U+b_{22}\bar V.
\end{aligned}
\]
In particular, $\ker\pi=\ann_{R_b}(N)$, and the hypotheses of \Cref{subsec:framework} hold with $M=N$ and $m_0=z$.

If $\det b\ne0$ and $b_{12}\ne0$, then, with $\hat b=b^{-1}$, the assignment
\begin{equation}\label{eq:nodepresentation}
\begin{aligned}
u&\longmapsto\frac{\det b}{b_{12}}\bigl(U-\hat b_{11}X-\hat b_{12}Y\bigr),& x&\longmapsto X,\\
v&\longmapsto V-\hat b_{12}X-\hat b_{22}Y,& y&\longmapsto Y
\end{aligned}
\end{equation}
induces an isomorphism $R(b)\to R_b$. Indeed, writing $u'=U-\hat b_{11}X-\hat b_{12}Y$ and $v'=V-\hat b_{12}X-\hat b_{22}Y$, we have $u'N=v'N=0$ and $u'v'=(b_{12}/\det b)z$, so the map respects the relations and is surjective. Both rings have the Hilbert function in \eqref{eq:Rqbasis}, so it is an isomorphism. It carries $(x,y)$ to $N$ and their annihilators to one another, identifying $B$ with $B'$ through the action on $N$.

A linear change of $(x,y)$ replaces $q$ by a congruent matrix. If $q_{12}=0$, then $q_{22}\ne0$, and the basis $x+y,y$ gives the off-diagonal entry $q_{22}$. Thus every $R(q)$ has a twisted presentation as above with $\det b\ne0$ and $b_{12}\ne0$.

The rings $R(q)$ are also Koszul. By \cite[Main Theorem]{AMS}, a quadratic standard graded $k$-algebra $A$ with $\dim_kA_2\le3$ can fail to be Koszul only if $\dim_kA_3=1$: the exceptional algebras have Hilbert series $(1+2t-2t^3)/(1-t)$ up to trivial fiber extensions, which change only degree one. Since $\dim_kR(q)_2=3$ and $\dim_kR(q)_3=2$ by \eqref{eq:Rqbasis}, the claim follows.

\subsection{A lift from a Clifford module}\label{clifford}
Let $\cA=k\langle\xi,\eta\rangle$ be the free associative algebra, graded by word length, and write $\xi_1=\xi$, $\xi_2=\eta$. For $a\in \cA$, let $\mathrm r_a\colon \cA\to \cA$ denote right multiplication by $a$, and set
\[
\xi_i'=\sum_jq_{ij}\,\xi_j\quad(i=1,2) \qquad \text{and} \qquad \chi=\sum_{i,j}q_{ij}\,\xi_i\xi_j=\xi_1'\xi+\xi_2'\eta\in \cA_2 .
\]
The elements $\xi_1',\xi_2'$ form a basis of $\cA_1$, and $\chi\ne0$ because the length-two words are linearly independent. For each $n$, let $V_n$ be the vector-space dual of $\cA_n$, placed in internal degree $n$.

The ring $B=k[x,y]/(x,y)^2$ is Koszul, with $\Ext_B(k,k)=B^!=\cA$ and $K_n\cong B\otimes_kV_n$ (see \cite{Lofwall,PP,Priddy}). Since $\m_B^2=0$, this is also the normalized bar resolution. Replacing $\bar x,\bar y$ by $x,y$ in its differential gives
\[
\begin{aligned}
\widetilde\partial(1\otimes\lambda)&=x\otimes\lambda\circ\mathrm r_\xi+y\otimes\lambda\circ\mathrm r_\eta,\\
\widetilde\partial^{\,2}(1\otimes\lambda)&=m_0\otimes\lambda\circ\mathrm r_\chi.
\end{aligned}
\]
Since $m_0\ne0$ and $\chi\ne0$, the square is nonzero in homological degree two, so this naive choice does not give a lift.

Any graded lift of $K$ has a linear differential. After choosing bases lifting those of $K$, the correction has the form $u\otimes\lambda\circ\fu+v\otimes\lambda\circ\fv$ for degree-one $k$-linear operators $\fu,\fv$ on $\cA$. Since $u^2,v^2,m_0$ are linearly independent, the square-zero condition requires $\fu^2=\fv^2=0$ and $\fu\fv+\fv\fu=-\mathrm r_\chi$. These are the relations of the Clifford algebra of the binary quadratic form $-\kappa st$ over $k[\kappa]$, with $\kappa$ acting by $\mathrm r_\chi$. We construct this action below, while the next lemma verifies directly that any such action will give a lift.

\begin{lemma}\label{lem:lift}
Let $\fu,\fv\colon \cA\to \cA$ be $k$-linear maps of degree one with
\[
\fu^2=0,\qquad\fv^2=0,\qquad\fu\fv+\fv\fu=-\mathrm r_\chi .
\]
Set $\cT_n=R\otimes_kV_n$ and
\[
d(1\otimes\lambda)=u\otimes\lambda\circ\fu+v\otimes\lambda\circ\fv+x\otimes\lambda\circ\mathrm r_\xi+y\otimes\lambda\circ\mathrm r_\eta\qquad(\lambda\in V_n).
\]
Then $\cT$ is a graded lift of $K$ along $\pi\colon R\to B=R/(u,v)$. Moreover $\cT^*\cong R\otimes_k\cA$, where $r\otimes a$ represents the $R$-linear map $1\otimes\lambda\mapsto\lambda(a)\,r$, and the dual differential is
\begin{equation}\label{eq:dualnode}
d^*(r\otimes a)=ru\otimes\fu a+rv\otimes\fv a+rx\otimes a\xi+ry\otimes a\eta .
\end{equation}
For $r\in R_j$ and $a\in \cA_m$, the corresponding element of $\Hom_{R}(\cT_m,R)$ has internal degree $j-m$.
\end{lemma}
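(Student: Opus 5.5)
The plan is to verify three things directly: that $d$ is homogeneous of internal degree zero and squares to zero, that $B\otimes_R\cT\cong K$, and that the dual has the stated form. Nothing earlier is needed beyond the Koszul description of $K$ and the graded pieces \eqref{eq:Rqbasis}.

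For the first point, write $d(1\otimes\lambda)=\sum_{t}t\otimes\lambda\circ\theta_t$. Here $t$ runs over $u,v,x,y$, and $\theta_u=\fu$, $\theta_v=\fv$, $\theta_x=\mathrm r_\xi$, $\theta_y=\mathrm r_\eta$. Each $\theta_t$ has degree one on $\cA$, so $\lambda\circ\theta_t\in V_{n-1}$ when $\lambda\in V_n$. The internal degree of $t\otimes\lambda\circ\theta_t$ is therefore $1+(n-1)=n$, and $d$ preserves internal degree. Extending $R$-linearly gives
\[
d^2(1\otimes\lambda)=\sum_{s,t}ts\otimes\lambda\circ\theta_t\theta_s .
\]
Now expand $ts$ in $R_2$ using \eqref{eq:Rqbasis}: $R_2=ku^2\oplus kv^2\oplus km_0$, the products $ux,uy,vx,vy$ vanish, $uv=vu=m_0$, and $x_ix_j=q_{ij}m_0$. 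Grouping terms gives
\[
d^2(1\otimes\lambda)=u^2\otimes\lambda\circ\fu^2+v^2\otimes\lambda\circ\fv^2+m_0\otimes\lambda\circ\bigl(\fu\fv+\fv\fu+\textstyle\sum_{i,j}q_{ij}\mathrm r_{\xi_j}\mathrm r_{\xi_i}\bigr).
\]
The order of composition has to be tracked here: $\lambda\circ\theta_t\circ\theta_s$ with $t$ applied last. This works out because $\mathrm r_{\xi_j}\mathrm r_{\xi_i}=\mathrm r_{\xi_i\xi_j}$, so the last sum is $\mathrm r_\chi$. The hypotheses then kill every coefficient, giving $d^2=0$.

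For the second point, reduce modulo $(u,v)$. The $u$ and $v$ terms vanish and $x,y$ become $\bar x,\bar y$. What remains is exactly the differential of $K_n=B\otimes_kV_n$ recalled above (the bar/Koszul resolution, with $\widetilde\partial$ reduced). Hence $\bar\cT\cong K$ as graded complexes.

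For the dual, note that $\cT_m=R\otimes_kV_m$ is finite free and $V_m$ is the dual of the finite-dimensional space $\cA_m$. So $\Hom_R(\cT_m,R)\cong R\otimes_k\cA_m$ via the stated pairing $1\otimes\lambda\mapsto\lambda(a)r$. If $r\in R_j$ and $a\in\cA_m$, this map sends the degree-$m$ generators to degree $j$, so its internal degree is $j-m$. To get \eqref{eq:dualnode}, compose $r\otimes a$ with $d$ on a generator $1\otimes\lambda\in\cT_{m+1}$:
\[
(r\otimes a)\circ d\,(1\otimes\lambda)=\sum_t t\,r\,(\lambda\circ\theta_t)(a)=\sum_t\lambda(\theta_t a)\,rt .
\]
This is the map represented by $\sum_t rt\otimes\theta_t a$. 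Since $\mathrm r_\xi a=a\xi$ and $\mathrm r_\eta a=a\eta$, this is the formula. Sign conventions for the dual differential are irrelevant, because an overall sign does not affect cohomology.

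The only real obstacle is the bookkeeping in the $d^2$ computation: keeping the order of the operators consistent, so that $\mathrm r_\chi$ appears rather than right multiplication by the transpose-ordered element. Symmetry of $q$ resolves this anyway, since $\sum_{i,j} q_{ij}\xi_j\xi_i=\chi$.
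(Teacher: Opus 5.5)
Your proposal is correct and follows the paper's proof: you expand $d^2$ using the products in $R_2$, where $\mathrm r_{\xi_i}\mathrm r_{\xi_j}=\mathrm r_{\xi_j\xi_i}$ together with symmetry of $q$ yields $\mathrm r_\chi$. You then reduce modulo $(u,v)$ to recover the differential of $K$, and obtain \eqref{eq:dualnode} and the degree count $j-m$ by evaluation. The only difference is that you spell out bookkeeping the paper leaves implicit, namely that $d$ has internal degree zero and that $\Hom_R(R\otimes_kV_m,R)\cong R\otimes_k\cA_m$.
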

\begin{proof}
The defining relations of $R$ give
\[
d^2(1\otimes\lambda)=u^2\otimes\lambda\fu^2+v^2\otimes\lambda\fv^2+m_0\otimes\lambda\Bigl(\fu\fv+\fv\fu+\sum_{i,j}q_{ij}\,\mathrm r_{\xi_i}\mathrm r_{\xi_j}\Bigr)=0,
\]
since $\mathrm r_{\xi_i}\mathrm r_{\xi_j}=\mathrm r_{\xi_j\xi_i}$ and $q$ is symmetric. Modulo $(u,v)$, the differential is $\bar d(1\otimes\lambda)=\bar x\otimes\lambda\circ\mathrm r_\xi+\bar y\otimes\lambda\circ\mathrm r_\eta$, which is the differential of $K$. Evaluation gives \eqref{eq:dualnode}, and the degree assertion follows since $1\otimes\lambda$ has degree $m$ while its value has degree $j$.
\end{proof}

Now let $k[\kappa]$ be a polynomial ring with $\deg\kappa=2$, and let $\cQ$ be the graded free $k[\kappa]$-module with basis $e,e_{\fu},e_{\fv},e_{\fu\fv}$ of degrees $0,1,1,2$. Define $k[\kappa]$-linear maps $\fu_{\cQ},\fv_{\cQ}\colon\cQ\to\cQ$ of degree one by
\[
\begin{aligned}
\fu_{\cQ}&\colon e\mapsto e_{\fu},\quad e_{\fu}\mapsto0,\quad e_{\fv}\mapsto e_{\fu\fv},\quad e_{\fu\fv}\mapsto0,\\
\fv_{\cQ}&\colon e\mapsto e_{\fv},\quad e_{\fv}\mapsto0,\quad e_{\fu}\mapsto-\kappa e-e_{\fu\fv},\quad e_{\fu\fv}\mapsto-\kappa e_{\fv}.
\end{aligned}
\]
Thus $\cQ$ is the left regular module of the Clifford algebra
\[
k[\kappa]\langle\fu_{\cQ},\fv_{\cQ}\rangle/
(\fu_{\cQ}^2,\fv_{\cQ}^2,\fu_{\cQ}\fv_{\cQ}+\fv_{\cQ}\fu_{\cQ}+\kappa),
\]
with $e_{\fu}=\fu_{\cQ}e$, $e_{\fv}=\fv_{\cQ}e$ and $e_{\fu\fv}=\fu_{\cQ}\fv_{\cQ}e$. The relations show that $1$, $\fu_{\cQ}$, $\fv_{\cQ}$ and $\fu_{\cQ}\fv_{\cQ}$ span this algebra, and their actions on $e$ are independent.

\begin{lemma}\label{lem:Q}
The operators on $\cQ$ satisfy the following properties:
\begin{enumerate}
\item $\fu_{\cQ}^2=0$, $\fv_{\cQ}^2=0$ and $\fu_{\cQ}\fv_{\cQ}+\fv_{\cQ}\fu_{\cQ}=-\kappa$.
\item The kernels and images are
\[
\begin{aligned}
\ker\fu_{\cQ}=\im\fu_{\cQ}&=k[\kappa]e_{\fu}\oplus k[\kappa]e_{\fu\fv},\\
\ker\fv_{\cQ}=\im\fv_{\cQ}&=k[\kappa]e_{\fv}\oplus k[\kappa](\kappa e+e_{\fu\fv}).
\end{aligned}
\]
\item If $a,a'\in\cQ$ satisfy $\fu_{\cQ}a=0$, $\fv_{\cQ}a'=0$ and $\fv_{\cQ}a+\fu_{\cQ}a'=0$, then $a=a'=0$.
\end{enumerate}
The same properties hold after tensoring with any graded $k$-vector space $E$, with operators $1_E\otimes\fu_{\cQ}$ and $1_E\otimes\fv_{\cQ}$ on $E\otimes_k\cQ$.
\end{lemma}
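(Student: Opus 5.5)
The proof is a direct computation on the $k[\kappa]$-basis $e,e_{\fu},e_{\fv},e_{\fu\fv}$, since all operators are $k[\kappa]$-linear. For (1), we evaluate $\fu_{\cQ}^2$, $\fv_{\cQ}^2$ and $\fu_{\cQ}\fv_{\cQ}+\fv_{\cQ}\fu_{\cQ}$ on each basis vector. For example,
\[
\fv_{\cQ}^2e_{\fu}=\fv_{\cQ}(-\kappa e-e_{\fu\fv})=-\kappa e_{\fv}+\kappa e_{\fv}=0,
\qquad
(\fu_{\cQ}\fv_{\cQ}+\fv_{\cQ}\fu_{\cQ})e=e_{\fu\fv}+(-\kappa e-e_{\fu\fv})=-\kappa e.
\]
The remaining six or so checks are of the same form.

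For (2), we write a general element as $a=\alpha e+\beta e_{\fu}+\gamma e_{\fv}+\delta e_{\fu\fv}$ with $\alpha,\beta,\gamma,\delta\in k[\kappa]$. Then
\[
\fu_{\cQ}a=\alpha e_{\fu}+\gamma e_{\fu\fv},
\qquad
\fv_{\cQ}a=-\beta\kappa e+\alpha e_{\fv}-\beta e_{\fu\fv}-\delta\kappa e_{\fv}.
\]
From the first formula, $\ker\fu_{\cQ}$ is cut out by $\alpha=\gamma=0$, and $\im\fu_{\cQ}$ is visibly the same span. From the second, $\fv_{\cQ}a=-\beta(\kappa e+e_{\fu\fv})+(\alpha-\kappa\delta)e_{\fv}$. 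Since $\kappa e+e_{\fu\fv}$ and $e_{\fv}$ are part of a $k[\kappa]$-basis (together with $e,e_{\fu}$), this vanishes exactly when $\beta=0$ and $\alpha=\kappa\delta$. The kernel is therefore spanned by $e_{\fv}$ and $\kappa e+e_{\fu\fv}$, and the image is the same span, since $\beta$ and $\alpha-\kappa\delta$ range over all of $k[\kappa]$.

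For (3), we use (2) to write $a=\beta e_{\fu}+\delta e_{\fu\fv}$ and $a'=\gamma e_{\fv}+\epsilon(\kappa e+e_{\fu\fv})$. Then
\[
\fv_{\cQ}a=-\beta(\kappa e+e_{\fu\fv})-\delta\kappa e_{\fv},
\qquad
\fu_{\cQ}a'=\gamma e_{\fu\fv}+\epsilon\kappa e_{\fu}+\epsilon\fu_{\cQ}e_{\fu\fv}=\gamma e_{\fu\fv}+\epsilon\kappa e_{\fu}.
\]
In the sum $\fv_{\cQ}a+\fu_{\cQ}a'$, the coefficient of $e$ is $-\beta\kappa$, that of $e_{\fu}$ is $\epsilon\kappa$, that of $e_{\fv}$ is $-\delta\kappa$, and that of $e_{\fu\fv}$ is $-\beta+\gamma$. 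Since $k[\kappa]$ is a domain, setting these to zero gives $\beta=\epsilon=\delta=0$ and then $\gamma=0$, so $a=a'=0$.

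For the final assertion, tensoring over $k$ with $E$ is exact and commutes with the formulas above. The identities in (1) tensor directly. For (2) and (3), $E\otimes_k\cQ$ is free over $E\otimes_k k[\kappa]=E[\kappa]$ on the same basis, and each step above is a coefficientwise argument. The only input beyond linear algebra is that multiplication by $\kappa$ is injective on the coefficients, which holds for $E[\kappa]$. Alternatively, kernels and images commute with the exact functor $E\otimes_k-$, and (3) is the statement that a certain $k$-linear map $\ker\fu_{\cQ}\oplus\ker\fv_{\cQ}\to\cQ$ is injective, which is preserved by $E\otimes_k-$. The one step needing care is the kernel of $\fv_{\cQ}$, where one has to notice the basis change to $\kappa e+e_{\fu\fv}$; after that, everything reduces to bookkeeping.
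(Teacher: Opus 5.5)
Your proof is correct. For (1), (2) and the passage to $E\otimes_k\cQ$ you do what the paper does: direct evaluation on the basis, with the tensored statement following because $E\otimes_k-$ is exact.

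For (3) you take a coordinate route, whereas the paper uses a short structural argument. Applying $\fu_{\cQ}$ to $\fv_{\cQ}a+\fu_{\cQ}a'=0$ gives $\fu_{\cQ}\fv_{\cQ}a=0$. Since $\fu_{\cQ}a=0$, the relation in (1) rewrites this as $-\kappa a=0$. Symmetrically, applying $\fv_{\cQ}$ gives $-\kappa a'=0$, and injectivity of $\kappa$ finishes.

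The paper's version uses only the Clifford relations and the injectivity of $\kappa$, not the explicit kernels from (2). So it holds verbatim on any module over the Clifford algebra on which $\kappa$ acts injectively, in particular on $E\otimes_k\cQ$, with nothing to recheck. Your computation gives explicit coordinates but ties (3) to the specific basis and to (2).

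One small phrasing point: $E[\kappa]=E\otimes_k k[\kappa]$ is a $k[\kappa]$-module, not a ring, so ``free over $E[\kappa]$'' should read ``a direct sum of four copies of $E[\kappa]$ indexed by the basis.'' Your coefficientwise argument, and your alternative via exactness, are otherwise fine as stated.
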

\begin{proof}
The first two assertions follow directly from the displayed action. For (3), applying $\fu_{\cQ}$ and $\fv_{\cQ}$ to $\fv_{\cQ}a+\fu_{\cQ}a'=0$ gives
\[
0=\fu_{\cQ}\fv_{\cQ}a=-\kappa a,\qquad
0=\fv_{\cQ}\fu_{\cQ}a'=-\kappa a',
\]
and multiplication by $\kappa$ is injective on $\cQ$. Tensoring over $k$ preserves these properties.
\end{proof}

\begin{lemma}\label{lem:clifford}
Let $E$ be a graded vector space with Hilbert series
\[
H_E(t)=\frac{1-t}{(1-2t)(1+t)}=1+2t^2+2t^3+6t^4+10t^5+22t^6+\cdots,
\]
so that $\dim E_0=1$ and $\dim E_n=(2^n+2(-1)^n)/3$ for $n\ge1$. There is an isomorphism of graded vector spaces $\Phi\colon E\otimes_k\cQ\to\cA$ with $\Phi(\kappa z)=\Phi(z)\chi$ for every $z\in E\otimes_k\cQ$.
\end{lemma}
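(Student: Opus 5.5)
The plan is to prove that $\cA$ is a graded free module over $k[\kappa]$ when $\kappa$ acts by $\mathrm r_\chi$. Then I match the Hilbert series of its generators with those of $E\otimes_k\cQ$. Since $\cQ$ is graded free over $k[\kappa]$ with basis $e,e_{\fu},e_{\fv},e_{\fu\fv}$ in degrees $0,1,1,2$, we have $E\otimes_k\cQ\cong (E\otimes_k Q_0)\otimes_k k[\kappa]$. Here $Q_0$ is the $k$-span of the four basis vectors, and $\kappa$ acts on the second factor. Its Hilbert series is therefore
\[
H_E(t)\cdot\frac{(1+t)^2}{1-t^2}=\frac{(1-t)(1+t)^2}{(1-2t)(1+t)(1-t^2)}=\frac{1}{1-2t}=H_{\cA}(t),
\]
which is consistent with the claim. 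A $k[\kappa]$-linear graded isomorphism is exactly a map satisfying $\Phi(\kappa z)=\Phi(z)\chi$. So it suffices to produce a graded $k$-subspace $C\subseteq\cA$ such that multiplication $C\otimes_k k[\kappa]\to\cA$, $c\otimes\kappa^n\mapsto c\chi^n$, is bijective, and such that $H_C(t)=H_E(t)(1+t)^2$.

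First I choose $C$ to be a graded complement of the right ideal $\cA\chi$ in $\cA$, so $\cA_n=C_n\oplus(\cA\chi)_n$ in each degree.

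Surjectivity of the multiplication map follows by induction on degree. Given $a\in\cA_n$, write $a=c+a'\chi$ with $c\in C_n$ and $\deg a'=n-2$. Then apply induction to $a'$.

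For injectivity I use that $\cA$ is a domain, so $\mathrm r_\chi$ is injective, which holds since $\chi\ne0$. Suppose $\sum_{n\ge n_0}c_n\chi^n=0$ with $c_n\in C$ and $c_{n_0}\neq0$. Rewrite this as $\bigl(c_{n_0}+(\sum_{n>n_0}c_n\chi^{n-n_0-1})\chi\bigr)\chi^{n_0}=0$. Cancelling $\chi^{n_0}$ gives $c_{n_0}\in C\cap\cA\chi=0$, a contradiction.

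Bijectivity then gives $H_{\cA}=H_C/(1-t^2)$, so
\[
H_C(t)=\frac{1-t^2}{1-2t}=H_E(t)(1+t)^2=H_{E\otimes_k Q_0}(t).
\]
Hence there is a degree-preserving linear isomorphism $\phi\colon E\otimes_k Q_0\to C$. I then define
\[
\Phi\bigl(\epsilon\otimes\kappa^n q\bigr)=\phi(\epsilon\otimes q)\chi^n
\]
for $\epsilon\in E$ and $q\in Q_0$. This is a graded isomorphism by the decomposition above, and it is $k[\kappa]$-linear by construction.

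The only substantive point is the freeness argument, specifically the domain property of the free algebra used to cancel powers of $\chi$. This is standard, since the leading words of a product multiply. The Hilbert-series bookkeeping, including the closed form for $\dim E_n$, is routine expansion of partial fractions.
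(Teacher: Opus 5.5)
Your proposal is correct and follows the paper's proof essentially step for step. Both arguments take a graded complement $C$ of $\cA\chi$ and show that $C\otimes_k k[\kappa]\to\cA$ is bijective, using that $\cA$ is bounded below for surjectivity and that $\cA$ is a domain for injectivity. Both then read off $H_C(t)=(1-t^2)/(1-2t)=(1+t)^2H_E(t)$ and extend a graded isomorphism $E\otimes_k Q_0\cong C$ $k[\kappa]$-linearly.
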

\begin{proof}
The free algebra $\cA$ is a domain and $\chi\ne0$, so $\mathrm r_\chi$ is injective. Let $G\subseteq \cA$ be a graded complement of $\cA\chi$. The map $G\otimes_kk[\kappa]\to \cA$ given by $g\otimes\kappa^i\mapsto g\chi^i$, is surjective because $\cA=G+\cA\chi$ and $\cA$ is bounded below. It is also injective. Indeed, if $\sum_ig_i\chi^i=0$ with $g_i\in G$, then $g_0\in G\cap \cA\chi=0$, and cancelling $\chi$ on the right gives the claim by induction. Hence
\[
H_G(t)=H_{\cA}(t)(1-t^2)=\frac{1-t^2}{1-2t}=(1+t)^2H_E(t).
\]
The coefficients of $H_E$ are nonnegative, so there is a graded isomorphism $E\otimes(ke\oplus ke_{\fu}\oplus ke_{\fv}\oplus ke_{\fu\fv})\cong G$. Extending it $k[\kappa]$-linearly gives the desired $\Phi$.
\end{proof}

Fix $E$ and $\Phi$ as in Lemma~\ref{lem:clifford}, and define operators on $\cA$ by
\begin{equation}\label{eq:transportoperators}
\fu=\Phi(1_E\otimes\fu_{\cQ})\Phi^{-1} \qquad \text{and} \qquad
\fv=\Phi(1_E\otimes\fv_{\cQ})\Phi^{-1}.
\end{equation}
They each have degree one, and Lemma~\ref{lem:Q}(1), together with the compatibility of $\Phi$ with $\kappa$, gives $\fu^2=\fv^2=0$ and $\fu\fv+\fv\fu=-\mathrm r_\chi$.

\subsection{Dual cohomology and the coresolution}
Let $\cT$ be the lift of Lemma~\ref{lem:lift} determined by the operators in \eqref{eq:transportoperators}.

\begin{proposition}\label{prop:nodeD}
One has $H^0(\cT^*)=k\,m_0$. For $m\ge1$, every class in $H^m(\cT^*)$ is represented by a cocycle in $R(q)_1\otimes \cA_m$, and $\dim_kH^m(\cT^*)=\dim E_{m-1}$. The lift $\cT$ is dually small.
\end{proposition}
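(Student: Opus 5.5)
The plan is to compute $H^\bullet(\cT^*)$ directly from the explicit description $\cT^*\cong R\otimes_k\cA$ with differential \eqref{eq:dualnode}. This differential sends $R_j\otimes\cA_m$ into $R_{j+1}\otimes\cA_{m+1}$, so $\cT^*$ splits by internal degree $j-m$. For a cochain I will look at its component of lowest $R$-degree and use the multiplication table \eqref{eq:Rqbasis}: $u\cdot u=u^2$, $u\cdot v=m_0$, $u\cdot x=0$, $x_ix_j=q_{ij}m_0$, $u^n v=0$ for $n\ge2$, and $\m m_0=0$.

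\textbf{Step 1: reduce to $R_1\otimes\cA_m$.} Let $z$ be a cocycle in $(\cT^*)^m$.
\begin{itemize}
\item If $z$ has a nonzero component $1\otimes a$ in $R_0\otimes\cA_m$, then the $R_1$-part of $d^*z$ contains $x\otimes a\xi$. Since $\mathrm r_\xi$ is injective on the domain $\cA$, we get $a=0$.
\item On components in $R_{\ge2}$, the terms $u^j\otimes a+v^j\otimes b$ are sent to $u^{j+1}\otimes\fu a+v^{j+1}\otimes\fv b$. The cocycle condition forces $\fu a=0$ and $\fv b=0$ at the top relevant degree. By Lemma~\ref{lem:Q}(2), transported through $\Phi$, we then have $\ker\fu=\im\fu$ and $\ker\fv=\im\fv$ on all of $\cA$. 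So $a=\fu a_0$ and $b=\fv b_0$, and subtracting $d^*(u^{j-1}\otimes a_0+v^{j-1}\otimes b_0)$ pushes $z$ down in $R$-degree.
\item When $j-1=1$ the subtracted coboundary acquires extra $m_0$ and $x,y$ cross terms. These land in $R_2$ or are absorbed into the $R_1$ part, so the induction still terminates. Any leftover $m_0\otimes c$ with $m\ge1$ must be killed by the same argument, or else shown to be a coboundary of $u\otimes(\cdot)$ or $v\otimes(\cdot)$ terms. This bookkeeping in $R_2$, where $m_0$ mixes with $u^2,v^2$, is the step I expect to be the main obstacle.
\end{itemize}
Granting the reduction, every class for $m\ge1$ is represented in $R_1\otimes\cA_m$. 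For $m=0$ the only surviving cocycle is $m_0\otimes1$: indeed $d^*(u^j\otimes1)=u^{j+1}\otimes\fu(1)\ne0$, because $\fu_\cQ e=e_\fu$ makes $\fu$ injective on $\cA_0$. Hence $H^0(\cT^*)=km_0$.

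\textbf{Step 2: count $H^m$ for $m\ge1$.} For $z=u\otimes a+v\otimes b+x\otimes c+y\otimes e$ with $a,b,c,e\in\cA_m$, the relations give
\[
d^*z=u^2\otimes\fu a+v^2\otimes\fv b+m_0\otimes\bigl(\fv a+\fu b+c\,\xi_1'+e\,\xi_2'\bigr).
\]
Since $\xi_1',\xi_2'$ is a basis of $\cA_1$, the map $(c,e)\mapsto c\xi_1'+e\xi_2'$ is an isomorphism $\cA_m^2\to\cA_{m+1}$. Hence the cocycles are parametrized by pairs $(a,b)\in\ker\fu\times\ker\fv$ in degree $m$. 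Coboundaries landing in $R_1\otimes\cA_m$ come from $d^*(1\otimes g)$ with $g\in\cA_{m-1}$, and this map is injective.

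Using Lemma~\ref{lem:Q}(2) and the isomorphism $\Phi$ of Lemma~\ref{lem:clifford}, both $\ker\fu$ and $\ker\fv$ have Hilbert series $H_E(t)(t+t^2)/(1-t^2)=tH_E(t)/(1-t)$. We also have $H_\cA=H_E(t)(1+t)/(1-t)$. The Hilbert series of $H^{\ge1}$ is therefore
\[
\frac{2tH_E}{1-t}-\frac{t(1+t)H_E}{1-t}=tH_E(t),
\]
which gives $\dim H^m(\cT^*)=\dim E_{m-1}$.

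\textbf{Step 3: dual smallness.} The statement $\ker\rho_0=km_0$ is just $H^0(\cT^*)=km_0$. For $m\ge1$, reduce a representative $z$ as above modulo $(u,v)$ to $\bar x\otimes c+\bar y\otimes e$ in $\Hom_B(K,B)\cong B\otimes\cA$.
\begin{itemize}
\item Because $\m_B^2=0$, the coboundaries there are exactly $\bar x\otimes g\xi+\bar y\otimes g\eta$.
\item If $\rho_m[z]=0$, subtract $d^*(1\otimes g)$ to arrange $c=e=0$.
\item The cocycle conditions then read $\fu a=0$, $\fv b=0$ and $\fv a+\fu b=0$.
\item Lemma~\ref{lem:Q}(3), transported through $\Phi$, forces $a=b=0$.
\end{itemize}
So each $\rho_m$ is injective for $m\ge1$, and $\cT$ is dually small.
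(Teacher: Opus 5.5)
Your overall plan is the paper's: split $\cT^*$ by internal degree, compute $R_j\otimes\cA_m$ for $j=0,1,2,\ge3$, count at $j=1$ via Hilbert series, and prove injectivity of $\rho_m$ by subtracting $d^*(1\otimes g)$ and invoking Lemma~\ref{lem:Q}(3). However, the $j=2$ case, which you flag as the main obstacle, is left unresolved, and the rest depends on it.

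After subtracting $d^*(u\otimes a_0+v\otimes b_0)$ from a cocycle in $R_2\otimes\cA_m$, you are left with $m_0\otimes c$ for some $c\in\cA_m$. This is automatically a cocycle, since $\m m_0=0$. For $m\ge1$ you must show it is a coboundary. Until you do, the $j=2$ part may contribute to $H^m(\cT^*)$. Then neither the representability in $R_1\otimes\cA_m$ nor $\dim_kH^m(\cT^*)=\dim E_{m-1}$ follows. Injectivity of $\rho_m$ also fails, because $\rho_m[m_0\otimes c]=0$.

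Neither route you suggest works. ``The same argument'' gives nothing, since there is no operator condition on $c$ to exploit. Coboundaries of $u\otimes(\cdot)$ and $v\otimes(\cdot)$ terms cannot do it either. To avoid creating $u^2$ and $v^2$ components you need $\alpha\in\ker\fu$ and $\alpha'\in\ker\fv$ in degree $m-1$. Already for $m=1$ these kernels vanish in degree $0$, because $\fu$ and $\fv$ are injective on $\cA_0$. So such coboundaries are zero, while $\cA_1\ne0$.

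The missing step uses the $x,y$ components instead. Since $\xi_1',\xi_2'$ is a basis of $\cA_1$ and $\cA$ is free, you can write $c=g\xi_1'+g'\xi_2'$ with $g,g'\in\cA_{m-1}$. Your own Step~2 formula, with the $u,v$ coefficients set to zero, then gives $d^*(x\otimes g+y\otimes g')=m_0\otimes(g\xi_1'+g'\xi_2')=m_0\otimes c$. This uses $xu=xv=yu=yv=0$ and $x_ix_j=q_{ij}m_0$.

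Relatedly, your worry about cross terms being ``absorbed into the $R_1$ part'' is unfounded. There are no $x,y$ cross terms, and $d^*$ raises $R$-degree by exactly one. By the internal-degree splitting you already noted, each $R_j$-component of a cocycle is itself a cocycle. So no downward induction is needed, and each $j$ can be handled independently. With this fix, the rest of your argument (the count at $j=1$ and the dual-smallness step) goes through as written.
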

\begin{proof}
The differential of $\cT^*$ preserves internal degree $j-m$, so we compute the cohomology at $R_j\otimes\cA_m$ separately for each $j$. By \eqref{eq:dualnode} in view of the basis \eqref{eq:Rqbasis}, we observe that
\begin{align*}
d^*(1\otimes a)&=u\otimes\fu a+v\otimes\fv a+x\otimes a\xi+y\otimes a\eta,\\
d^*(u\otimes a+v\otimes a'+x\otimes g+y\otimes g')&=u^2\otimes\fu a+v^2\otimes\fv a'+m_0\otimes(\fv a+\fu a'+g\xi_1'+g'\xi_2'),\\
d^*(u^2\otimes a+v^2\otimes a'+m_0\otimes a'')&=u^3\otimes\fu a+v^3\otimes\fv a',\\
d^*(u^j\otimes a+v^j\otimes a')&=u^{j+1}\otimes\fu a+v^{j+1}\otimes\fv a'\qquad(j\ge3).
\end{align*}
Since $\xi_1',\xi_2'$ is a basis of $\cA_1$, every element of $\cA_m$ with $m\ge1$ can be written uniquely in the form $g\xi_1'+g'\xi_2'$ with $g,g'\in \cA_{m-1}$. By Lemma~\ref{lem:Q}(2) and \eqref{eq:transportoperators}, $\ker\fu=\fu(\cA)$ and $\ker\fv=\fv(\cA)$, so in particular $\fu$ and $\fv$ are injective on $\cA_0$.

Consider first when $j=0$. If $d^*(1\otimes a)=0$, then $a\xi=0$, so $a=0$.

Now consider when $j=1$. An element $z=u\otimes a+v\otimes a'+x\otimes g+y\otimes g'$ of $R_1\otimes \cA_m$ is a cocycle if and only if $\fu a=0$, $\fv a'=0$, and $\fv a+\fu a'+g\xi_1'+g'\xi_2'=0$. For $m=0$ this forces $a=a'=0$ and then $g=g'=0$. For $m\ge1$, the cocycles correspond bijectively to the pairs $(a,a')\in\ker\fu\times\ker\fv$ in $\cA_m$, because $(g,g')$ is then determined. The coboundaries $d^*(1\otimes f)$, $f\in \cA_{m-1}$, form a space of dimension $\dim \cA_{m-1}$, since $f\xi$ determines $f$. By Lemma~\ref{lem:Q}(2), $\ker\fu$ and $\ker\fv$ both have Hilbert series $H_E(t)(t+t^2)/(1-t^2)=tH_E(t)/(1-t)$. Hence
\[
\sum_{m\ge0}\dim H^m_{(j=1)}\,t^m=\frac{2tH_E(t)}{1-t}-\frac{t}{1-2t}=\frac{t(1-t)}{(1-2t)(1+t)}=t\,H_E(t).
\]

Next consider the case when $j=2$. A cocycle $u^2\otimes a+v^2\otimes a'+m_0\otimes a''$ has $a=\fu f$ and $a'=\fv f'$. Subtracting $d^*(u\otimes f+v\otimes f')$ leaves a cocycle $m_0\otimes a'''$. If $m\ge1$, write $a'''=g\xi_1'+g'\xi_2'$; then $m_0\otimes a'''=d^*(x\otimes g+y\otimes g')$. If $m=0$, then $a=a'=0$ and there are no coboundaries. So this part of the cohomology is $k(m_0\otimes1)$ for $m=0$ and zero for $m\ge1$.

Finally, consider when $j \ge 3$. A cocycle $u^j\otimes a+v^j\otimes a'$ has $\fu a=0=\fv a'$, so $a=\fu f$ and $a'=\fv f'$, and it equals $d^*(u^{j-1}\otimes f+v^{j-1}\otimes f')$.

This proves the first two assertions. For the dually small condition, note that $\rho_0(m_0)=\pi(m_0)=0$, so $\ker\rho_0=k\,m_0$. Let $m\ge1$, and let $z=u\otimes a+v\otimes a'+x\otimes g+y\otimes g'$ be a cocycle with $\rho_m[z]=0$. Reducing \eqref{eq:dualnode} modulo $(u,v)$ identifies $\Hom_B(K,B)$ with $B\otimes \cA$ having differential $\bar d^*(\beta\otimes a)=\beta\bar x\otimes a\xi+\beta\bar y\otimes a\eta$. The image of $z$ is $\bar x\otimes g+\bar y\otimes g'\in B_1\otimes \cA_m$, and the coboundaries in $B_1\otimes \cA_m$ are the elements $\bar d^*(1\otimes f)$, $f\in \cA_{m-1}$. Hence $g=f\xi$ and $g'=f\eta$ for some $f$. Then $z-d^*(1\otimes f)=u\otimes\alpha+v\otimes\alpha'$ is a cocycle without $x$- and $y$-components, so $\fu\alpha=0$, $\fv\alpha'=0$ and $\fv\alpha+\fu\alpha'=0$. Lemma~\ref{lem:Q}(3) and \eqref{eq:transportoperators} give $\alpha=\alpha'=0$. Thus $[z]=0$, and $\rho_m$ is injective.
\end{proof}
We are now ready for the proof of Theorem~\ref{thm:node}:
\begin{proof}[Proof of Theorem~\ref{thm:node}]
The ring data above satisfy the hypotheses of \Cref{subsec:framework}, and Proposition~\ref{prop:nodeD} verifies the dually small condition for the constructed lift. \Cref{thm:U} therefore implies that $k$ is an infinite graded syzygy, and so there is a minimal graded coresolution of the form:
\[
0\longrightarrow k\langle2\rangle\xrightarrow{\;1\mapsto m_0\;}R(q)\longrightarrow G^1\longrightarrow G^2\longrightarrow\cdots,
\]
where $G^n=\cF_n^*$ for the complex $\cF$ constructed in Section~\ref{sec:criterion} from this lift.

By Proposition~\ref{prop:nodeD}, $\fB_0=\varnothing$ and $|\fB_m|=\dim E_{m-1}$ for $m\ge1$. The representatives $\varphi_\nu\in R_1\otimes\cA_m$ have internal degree $1-m$. The pairing \eqref{eq:dual} therefore gives $\deg z_\nu=2-(1-m)=m+1$, so every letter, and hence every word, has weight equal to its length. Since $\cT_j=R(q)\langle j\rangle^{2^j}$, the construction gives
\[
\cF_n=R(q)\langle n\rangle^{\beta_n} \qquad \text{and} \qquad
G^n=R(q)\langle-n\rangle^{\beta_n}.
\]
The length series of the alphabet is $L(z)=\sum_{m\ge1}\dim E_{m-1}z^{m+1}=z^2H_E(z)$, and that of its words is $1/(1-L(z))$. Multiplying by the rank series $1/(1-2z)$ of $\cT$ gives
\[
\sum_n\beta_nz^n=\frac{1}{1-2z}\cdot\frac{1}{1-z^2H_E(z)}=\frac{(1+z)}{(1-2z)(1+z)-z^2(1-z)}=\frac{1+z}{1-z-3z^2+z^3},
\]
completing the proof.
\end{proof}

The twist is essential in these examples, as the following proposition shows.
\begin{proposition}\label{prop:nodetwist}
If $b_{12}=0$, then the minimal graded free resolution of $k$ over $B'$ has no graded lift along $\pi\colon R_b\to B'$, and $k$ is not a third syzygy over $R_b$. This applies in particular to the untwisted ring $R_0=S\ltimes N$.
\end{proposition}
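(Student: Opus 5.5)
The plan is to reduce both assertions to one dimension count in internal degree two of $R_b$. When $b_{12}=0$, the ideal $\fa=\ker\pi$ turns out to be too small there to produce all of $R_2$.

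\emph{Degree-one and degree-two data.} Take $b_{12}=0$. Then $\pi(X)=b_{11}\bar U$ and $\pi(Y)=b_{22}\bar V$. A degree-one element $cU+dV+eX+fY$ lies in $\fa=\ker\pi$ exactly when $c+eb_{11}=0$ and $d+fb_{22}=0$. Hence
\[
\fa_1=kX'\oplus kY',\qquad X':=X-b_{11}U,\quad Y':=Y-b_{22}V.
\]
Since $\pi(z)=0$, we also have $z\in\fa_2$. The degree-two piece of $R_b$ is $(R_b)_2=kU^2\oplus kV^2\oplus kz$, using $UV=0$. From $UX=VY=z$, $UY=VX=0$, $X^2=b_{11}z$, $XY=0$ and $Y^2=b_{22}z$ one computes
\[
X'U=z-b_{11}U^2,\qquad Y'V=z-b_{22}V^2,
\]
and $X'$, $Y'$ kill the remaining basis elements of $(R_b)_1$. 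Thus
\[
\fa_1(R_b)_1=k(z-b_{11}U^2)\oplus k(z-b_{22}V^2)
\]
is $2$-dimensional, while $(R_b)_1^2=(R_b)_2$ is $3$-dimensional. Since $\fa$ has nothing in degree zero and $z\m=0$, we get $(\fa\m)_2=\fa_1(R_b)_1$. Consequently $\m^2\ne\fa\m$.

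\emph{Not a third syzygy.} The ring $S=k[U,V]/(UV)$ is reduced of dimension one, so $\operatorname{depth}S>0$. Also $\m_{B'}^2=0$. If $k$ were a third syzygy, Proposition~\ref{prop:necU} would force $\m^2=\fa\m$, contradicting the count above.

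\emph{No graded lift.} Suppose $\cT$ is a graded lift of the minimal resolution of $k$ over $B'$. Then $\cT$ has a linear differential, and $d(\cT_1)$ is generated by degree-one elements $p,q$ lifting $\bar U,\bar V$.
\begin{itemize}
\item The complex $K_2\to K_1$ maps onto all of the syzygies $\{(\ell_1,\ell_2)\in B_1'^2\}$, since every pair satisfies $\ell_1\bar U+\ell_2\bar V=0$.
\item Lifting $K_2$ to $\cT_2$ with $d^2=0$ therefore means: for every $\ell_1,\ell_2\in(R_b)_1$ there are $\alpha_1,\alpha_2\in\fa_1$ with $(\ell_1+\alpha_1)p+(\ell_2+\alpha_2)q=0$.
\item Taking $(\ell_1,\ell_2)=(\ell,0)$ and $(0,\ell)$ gives $\ell p,\ \ell q\in\fa_1p+\fa_1q\subseteq\fa_1(R_b)_1$ for every $\ell\in(R_b)_1$.
\item Since $(R_b)_1=kp+kq+\fa_1$, it follows that $(R_b)_2=(R_b)_1(R_b)_1\subseteq(R_b)_1\{p,q\}+\fa_1(R_b)_1\subseteq\fa_1(R_b)_1$.
\end{itemize}
This contradicts the $3$ versus $2$ dimension count. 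The case $R_0=S\ltimes N$ is $b=0$, so it is included.

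The main point of care is the lift argument: one must justify that the differential of a graded lift is linear, and that the degree-two syzygies of $K$ realize all pairs $(\ell_1,\ell_2)$. The first follows because $\cT$ reduces to the linear complex $K$ and is graded. The second holds because $\m_{B'}^2=0$. The remaining steps are the explicit multiplication table.
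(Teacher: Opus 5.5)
Your proposal is correct and follows the paper's proof. It uses the same degree-two count $\dim_k\fa_1(R_b)_1<3=\dim_k(R_b)_2$, and Proposition~\ref{prop:necU} for the third-syzygy claim. For the lift, the paper imports from the proof of that proposition the facts $I+\fa=\m$ and $I\cap\fa=I\fa$ (from $H_1(\bar\cT)=0$); your argument is the same one, unwound by hand in internal degree two.

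One small slip: $\fa_1(R_b)_1$ has dimension at most $2$, not exactly $2$. When $b_{11}=b_{22}=0$, as for $R_0$, both generators equal $z$ and the span is $kz$. Only the strict inequality with $3$ is used, so nothing breaks.
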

\begin{proof}
Write $R=R_b$ and $\fa=\ann_R(N)$. When $b_{12}=0$, the space $\fa_1$ is spanned by $X-b_{11}U$ and $Y-b_{22}V$, so
\[
\fa_1R_1=k(z-b_{11}U^2)+k(z-b_{22}V^2) \qquad \text{and} \qquad
\dim_k\fa_1R_1\le2<3=\dim_kR_2.
\]

If $k$ were a third syzygy over $R$, Proposition~\ref{prop:necU} would give $\m^2=\fa\m$, since $S=R/N$ has positive depth and $\m_{B'}^2=0$. Since $R$ is standard graded and $\fa\subseteq\m$, the degree-two part of an equality $\m^2=\fa\m$ would read $R_2=\fa_1R_1$, which is impossible. 

If $\cT$ were a graded lift, then $I=d(\cT_1)$ would satisfy $I+\fa=\m$, since $H_0(\bar\cT)=R/(I+\fa)$ is $k$, and $I\cap\fa=I\fa$, since $H_1(\bar\cT)=0$, as in the proof of Proposition~\ref{prop:necU}. The last step of that proof would again give $\m^2=\fa\m$, which as we saw above cannot be.
\end{proof}

\section{Coresolutions as the dual of a resolution}\label{sec:graded}

Throughout this section we let $(R,\m,k)$ be a positively graded $k$-algebra of dimension $d$. For an $R$-module $N$, we let $H_N(t):=\sum_{i \in \Z} \dim_k(N_i)t^i$ denote the Hilbert series of $N$.
In this section we prove Theorem~\ref{thm:introC} as a consequence of the following more general result:

\begin{theorem}\label{thm:extlength}
Suppose that $\Ext_R^i(N,R)$ has finite length for every $i$ and vanishes for $i\gg0$. Then $N$ has finite length and
\[
\dim_k N=\sum_{i\ge0}(-1)^{i+d}\dim_k\Ext_R^i(N,R).
\]
In particular,
\[
\dim_kN\le\sum_{i\ge0}\dim_k\Ext_R^i(N,R).
\]
\end{theorem}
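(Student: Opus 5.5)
The plan is to first show that $N$ has finite length by localizing, and then to extract the formula from Hilbert series of a minimal graded free resolution and its $R$-dual, comparing the two at $t=1$ after clearing the pole of order $d$. The sign $(-1)^d$ should come from the substitution $t\mapsto t^{-1}$ in $H_R$.

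\emph{Finite length.} Let $\mathfrak p$ be a homogeneous prime with $\mathfrak p\neq\m$. Since each $\Ext^i_R(N,R)$ has finite length, $\Ext^i_{R_{\mathfrak p}}(N_{\mathfrak p},R_{\mathfrak p})=0$ for all $i$. If $N_{\mathfrak p}\neq0$, then the grade of $\ann N_{\mathfrak p}$ is at most $\operatorname{depth}R_{\mathfrak p}<\infty$. Moreover $\Ext^i_{R_{\mathfrak p}}(N_{\mathfrak p},R_{\mathfrak p})\neq0$ for $i$ equal to that grade, which is a contradiction. Hence $N_{\mathfrak p}=0$ for every homogeneous $\mathfrak p\neq\m$. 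It suffices to check homogeneous primes, since the support of a graded module is determined by its homogeneous primes. So $N$ has finite length.

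\emph{Hilbert series bookkeeping.} Let $F$ be a minimal graded free resolution of $N$ with $F_i=\bigoplus_j R\langle -a_{ij}\rangle$. Put $P_i(t)=\sum_j t^{a_{ij}}$, and write $H_R(t)=Q(t)/(1-t)^d$ with $Q(1)\neq0$. Fix $n$ with $\Ext^i_R(N,R)=0$ for $i>n$, and let $m>n$. Truncating $F$ and $F^*$ at stage $m$ and writing $A_m(t)=\sum_{i\le m}(-1)^iP_i(t)$, additivity gives
\[
H_N(t)=A_m(t)H_R(t)+(-1)^{m+1}H_{\Omega^{m+1}N}(t),
\]
\[
\sum_{i<m}(-1)^iH_{\Ext^i_R(N,R)}(t)+(-1)^mH_{C_m}(t)=A_m(t^{-1})H_R(t).
\]
Here $C_m=\operatorname{coker}(F_{m-1}^*\to F_m^*)$, and each $F_i^*$ has Hilbert series $P_i(t^{-1})H_R(t)$. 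Multiply both identities by $(1-t)^d$ and evaluate at $t=1$. The finite length modules $N$ and $\Ext^i_R(N,R)$ contribute $0$ there when $d>0$. For $d>0$ this forces a refinement: I would instead compare the coefficients of $(1-t)^{-k}$ for all $k$, or equivalently use the full rational functions. The identity $H_R(t^{-1})$ versus $H_R(t)$ is not available in general, so the cleaner route is to compare $A_m(t)$ and $A_m(t^{-1})$ directly. The $d$-fold pole appearing in $A_m(t^{-1})H_R(t)$ must be matched against the expansion of $A_m(t)H_R(t)$ in $(t-1)$. Since $A_m(t^{-1})$ and $A_m(t)$ agree to order $d-1$ at $t=1$ up to the sign $(-1)^k$ on the $k$-th Taylor coefficient, the leading surviving terms produce $\dim_kN$ on one side and $(-1)^d\sum_i(-1)^i\dim_k\Ext^i_R(N,R)$ on the other.

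\emph{The main obstacle} is controlling the tail terms $H_{\Omega^{m+1}N}$ and $H_{C_m}$, which typically have dimension $d$ and so contribute to the pole. My first attempt will use that $X=\Omega^{n+1}N$ satisfies $\Ext^{>0}_R(X,R)=0$. Then $F_{\ge n+1}^*$ is exact beyond its start, so $C_m$ is built from the duals $(\Omega^{i}N)^*$ by exact sequences. I would then choose $m$ of a fixed parity, or average over two consecutive values of $m$, so that the polar parts of the tails agree after $t\mapsto t^{-1}$ and cancel between the two identities. Once the tails are eliminated, the equality $\dim_kN=\sum_i(-1)^{i+d}\dim_k\Ext^i_R(N,R)$ follows. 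The inequality is then immediate from the triangle inequality.
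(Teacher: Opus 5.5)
Your finite-length step is correct, and it does not depend on the rest of the argument. A nonzero $N_{\mathfrak p}$ would have $\Ext^g_{R_{\mathfrak p}}(N_{\mathfrak p},R_{\mathfrak p})\neq0$ for $g=\operatorname{grade}\ann N_{\mathfrak p}$, and associated primes of $N$ are homogeneous. The paper instead reads off $\dim N=0$ from the absence of a pole of $H_N$ at $t=1$. The gap is in the formula itself, exactly at the tail terms you flag, and the proposed remedy cannot work.

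Put $X_m=\Omega^{m+1}N$ and $E(t)=\sum_i(-1)^iH_{\Ext^i_R(N,R)}(t)$. For $m>n$ one has $C_m\cong X_m^*$. Now eliminate $A_m$ between your two identities: solve the first for $A_m(t)$, substitute $t\mapsto t^{-1}$, and insert the result into the second. This gives
\[
E(t)-\frac{H_N(t^{-1})H_R(t)}{H_R(t^{-1})}=(-1)^{m+1}\frac{\Delta(X_m)}{H_R(t^{-1})},\qquad
\Delta(X):=H_{X^*}(t)H_R(t^{-1})-H_R(t)H_X(t^{-1}).
\]
The function $\Delta$ vanishes on graded free modules. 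It is also additive along $0\to X_{m+1}\to F_{m+1}\to X_m\to0$ together with its dual $0\to X_m^*\to F_{m+1}^*\to X_{m+1}^*\to0$, which is exact since $\Ext^{m+2}_R(N,R)=0$. Hence $\Delta(X_{m+1})=-\Delta(X_m)$, and the right-hand side above is the same for every $m>n$. Choosing a parity, or averaging over consecutive values of $m$, returns the identical unknown error term.

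Evaluating at $t=1$ does not help either. Because $E$ and $H_N$ are Laurent polynomials, the error is regular at $t=1$. But the formula you want says its value there is $0$, i.e.\ that $\Delta(X_m)$ has a pole of order $<d$. That is a genuine statement about the Laurent coefficients of $H_{X_m}$ and $H_{X_m^*}$, and no amount of bookkeeping with truncations of $F$ and $F^*$ produces it. Separately, the claimed Taylor symmetry between $A_m(t)$ and $A_m(t^{-1})$ is false in powers of $t-1$: at $(t-1)^2$ the functions $t^2$ and $t^{-2}$ have coefficients $1$ and $3$. The symmetry holds only in the variable $\log t$, and even then it says nothing about the tails.

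The missing ingredient is exactly Theorem 1 of Avramov--Buchweitz--Sally \cite{ABS}, which the paper invokes. It says that if every $\Ext^i_R(N,R)$ has finite length and these vanish for $i\gg0$, then $E(t)=H_N(t^{-1})H_R(t)/H_R(t^{-1})$ as rational functions. By the display above, this is equivalent to $\Delta(X_m)=0$ for one (hence every) $m>n$. With it the proof never compares polar parts. Write $H_R(t)=\epsilon_R(t)/\prod_{i=1}^d(1-t^{a_i})$ with $\epsilon_R\in\Z[t,t^{-1}]$ and $\epsilon_R(1)>0$. Using $\prod_i(1-t^{a_i})=(-1)^dt^{\sum a_i}\prod_i(1-t^{-a_i})$ to clear denominators gives the Laurent-polynomial identity
\[
\epsilon_R(t)H_N(t^{-1})=(-1)^dt^{\sum a_i}\epsilon_R(t^{-1})E(t).
\]
This yields both the finite length of $N$ and, after evaluating at $t=1$, the formula. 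If you do not want to cite \cite{ABS}, you must supply an actual proof that $\Delta(X_m)=0$, or at least that its coefficient of $(1-t)^{-d}$ vanishes.
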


\begin{proof}
Set $E(t)=\sum_{i \ge 0} (-1)^iH_{\Ext^i_R(N,R)}(t)$, and note that by assumption $E(t)$ is a Laurent polynomial in $t$. It follows from \cite[Theorem 1]{ABS} that there is an equality of rational functions
\[E(t)=\dfrac{H_{N}(t^{-1})H_R(t)}{H_R(t^{-1})}.\]
By \cite[Proposition 4.4.1]{BH}, we may write $H_R(t)=\dfrac{\epsilon_R(t)}{\prod_{i=1}^d (1-t^{a_i})}$ for some positive integers $a_1,\dots,a_d$ and with $\epsilon_R(t) \in \Z[t,t^{-1}]$ satisfying $\epsilon_R(1)>0$. But then from the above, we have 
\[H_{N}(t^{-1})\epsilon_R(t)\prod^d_{i=1} (1-t^{-a_i})=E(t)\prod^d_{i=1} (1-t^{a_i})\epsilon_R(t^{-1})=(-1)^dt^{\sum^d_{i=1} a_i}\epsilon_R(t^{-1})E(t)\prod^d_{i=1}(1-t^{-a_i}),\] 
which gives
\[\epsilon_R(t)H_{N}(t^{-1})=(-1)^dt^{\sum^d_{i=1} a_i}\epsilon_R(t^{-1})E(t).\]
In particular, $H_{N}(t)$ has no pole at $t=1$, and appealing again to \cite[Proposition 4.4.1]{BH} implies that $\dim(N)=0$, so $H_{N}(t)$ is a Laurent polynomial. Since $\epsilon_R(1) \ne 0$, evaluating at $t=1$ gives 
\[\epsilon_R(1)\dim_k N=\epsilon_R(1)\sum_{i\ge0}(-1)^{i+d}\dim_k\Ext_R^i(N,R).\]
Since $\epsilon_R(1) \ne 0$, the claims follow.
\end{proof}

If $\Hom_R(N,R)\cong k$ and $\Ext^i_R(N,R)=0$ for $i>0$, then a minimal free resolution of $N$ dualizes to a minimal coresolution of $k$. The following result shows that, allowing for a graded shift, this approach cannot produce positive-dimensional examples in the graded setting.

\begin{theorem}\label{thm:graded}
Suppose $(R,\m,k)$ is a positively graded $k$-algebra. If there is an $R$-module $N$ and an integer $\ell$ such that $\Hom_R(N,R) \cong k\langle \ell \rangle$ and $\Ext^{i>0}_R(N,R)=0$, then $R$ is an Artinian Gorenstein ring. 
\end{theorem}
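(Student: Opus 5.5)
The plan is to apply Theorem~\ref{thm:extlength} directly to $N$ and then use the resulting length computation, together with the duality between $N$ and $k$ coming from the vanishing of higher Ext, to force both $\dim R=0$ and the Gorenstein property.

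\textbf{Step 1: apply Theorem~\ref{thm:extlength}.} By hypothesis $\Ext^0_R(N,R)\cong k\langle\ell\rangle$ has length one and $\Ext^i_R(N,R)=0$ for $i>0$. So every Ext module has finite length and they vanish for $i\gg0$. Theorem~\ref{thm:extlength} then gives that $N$ has finite length and
\[
\dim_kN=(-1)^d\cdot 1.
\]
Since $N\ne0$ (its dual is nonzero), $\dim_kN$ is a positive integer. Hence $d$ is even and $\dim_kN=1$, so $N\cong k\langle j\rangle$ for some $j$.

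\textbf{Step 2: force $d=0$.} Now $\Ext^i_R(k,R)=0$ for all $i>0$ and $\Hom_R(k,R)\cong k$ up to shift. The vanishing $\Ext^i_R(k,R)=0$ for $i<\operatorname{depth}R$ together with $\Hom_R(k,R)\ne0$ gives $\operatorname{depth}R=0$. Since $\Ext^i_R(k,R)=0$ for all $i\ge1$, we get $\Ext^i_R(k,R)=0$ for $i\gg0$, so $R$ has finite injective dimension as a module over itself (graded version of the standard criterion, via $\Ext^i_{R_\m}(k,R_\m)$). Thus $R$ is Gorenstein, and then $\operatorname{injdim}R=\operatorname{depth}R=0$. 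Hence $d=\dim R=0$, and $R$ is Artinian Gorenstein; this is also consistent with $\dim_k\soc R=\dim_k\Hom_R(k,R)=1$.

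\textbf{Main obstacle.} The only delicate point is passing from the vanishing of $\Ext_R^{i\gg0}(k,R)$ to the Gorenstein property in the graded setting. I would localize at the homogeneous maximal ideal $\m$: $\Ext$ commutes with localization, and $R_\m$ is local with residue field $k$, so the local criterion (\cite[Theorem 3.1.17]{BH}-type: $\Ext^i(k,R)=0$ for large $i$ implies finite injective dimension) applies to $R_\m$. Graded results then transfer back: $\dim R=\dim R_\m$, and the graded ring is Gorenstein iff $R_\m$ is (\cite[Chapter 1.5 and 3.6]{BH}). Alternatively, one can bypass this entirely. Step 1 already shows $N\cong k\langle j\rangle$ with $\Ext^{i>0}_R(k,R)=0$, so $\operatorname{depth}R=0$ and $\operatorname{injdim}R_\m<\infty$, and the Bass formula gives $\dim R_\m=\operatorname{depth}R_\m=0$.
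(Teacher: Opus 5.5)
Your proposal is correct and follows essentially the same route as the paper: you apply Theorem~\ref{thm:extlength} to get $\dim_k N=1$, conclude $N\cong k$ up to shift, and deduce that $R$ is Artinian Gorenstein from $\Ext^{i>0}_R(k,R)=0$. The only differences are that the paper uses the inequality $\dim_k N\le\sum_i\dim_k\Ext^i_R(N,R)=1$ where you use the equivalent signed equality $\dim_k N=(-1)^d$, and that your localization and Bass-formula argument spells out the last step, which the paper leaves implicit.
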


\begin{proof}
By assumption we have $\sum_{i \ge 0} \dim_k \Ext^i_R(N,R)=1$, and it follows from \Cref{thm:extlength} that $\dim_k N=1$. This forces $N \cong k$ up to graded shift, but then $\Ext^{i>0}_R(k,R)=0$ so $R$ is Artinian Gorenstein. 
\end{proof}

\begin{remark}\label{remark:dualnotacyclic}
\Cref{thm:graded} shows that in the graded setting, the coresolutions of Section~\ref{sec:criterion} cannot come from dualizing a resolution unless $R$ is Artinian Gorenstein to begin with.
\end{remark}

\section*{Acknowledgements}
We are grateful to Srikanth Iyengar for introducing us to this problem, and for many helpful discussions. The first author was supported by the National Natural Science Foundation of China (No. 12401046).

\section*{AI Use Disclosure}
Various AI models were used in the preparation of this work for ideation, drafting, and proofreading. The authors take responsibility for all content.

\vspace{.3in}


\begin{thebibliography}{99}

\bibitem{Mohsen}
M.~Asgharzadeh, \emph{Reflexivity revisited},
preprint, \href{https://arxiv.org/abs/1812.00830}{arXiv:1812.00830}
(2018), revised 2025.


\bibitem{AMS} R.~Ahangari Maleki and L.~M. \c{S}ega, \emph{The absolutely Koszul and Backelin--Roos properties for spaces of quadrics of small codimension}, J. Algebra \textbf{551} (2020), 232--284.

\bibitem{Avramov} L.~L. Avramov, \emph{Infinite free resolutions}, in: Six lectures on commutative algebra (Bellaterra, 1996), Progr. Math. \textbf{166}, Birkh\"auser, Basel, 1998, 1--118.

\bibitem{ABS} L.~L. Avramov, R.-O. Buchweitz and J.~D. Sally,
\emph{Laurent coefficients and Ext of finite graded modules},
Math. Ann. \textbf{307} (1997), 401--415.

\bibitem{BH} W.~Bruns and J.~Herzog,
\emph{Cohen--Macaulay rings}, revised ed.,
Cambridge Stud. Adv. Math. \textbf{39},
Cambridge University Press, Cambridge, 1998.

\bibitem{Gulliksen} T.~H. Gulliksen, \emph{A proof of the existence of minimal $R$-algebra resolutions}, Acta Math. \textbf{120} (1968), 53--58.

\bibitem{Lofwall} C.~L\"ofwall, \emph{On the subalgebra generated by the one-dimensional elements in the Yoneda Ext-algebra}, in: Algebra, algebraic topology and their interactions (Stockholm, 1983), Lecture Notes in Math. \textbf{1183}, Springer, Berlin, 1986, 291--338.

\bibitem{MRS} C.~Miller, H.~Rahmati and J.~Striuli, \emph{The residue field as a high syzygy}, Comm. Algebra \textbf{46} (2018), no.~6, 2620--2630.

\bibitem{PP} A.~Polishchuk and L.~Positselski, \emph{Quadratic algebras}, Univ. Lecture Ser. \textbf{37}, Amer. Math. Soc., Providence, RI, 2005.

\bibitem{Priddy} S.~B. Priddy, \emph{Koszul resolutions}, Trans. Amer. Math. Soc. \textbf{152} (1970), 39--60.

\bibitem{Schoeller} C.~Schoeller, \emph{Homologie des anneaux locaux noeth\'eriens}, C. R. Acad. Sci. Paris S\'er. A \textbf{265} (1967), 768--771.

\bibitem{Tate} J.~Tate, \emph{Homology of Noetherian rings and local rings}, Illinois J. Math. \textbf{1} (1957), 14--27.

\end{thebibliography}
\end{document}